\newtheorem{Remark}{Remark}[section]
\newtheorem{Corollary}[Remark]{Corollary}
\newtheorem{Definition}[Remark]{Definition}
\newtheorem{Fact}[Remark]{Fact}
\newtheorem{Lemma}[Remark]{Lemma}
\newtheorem{Theorem}[Remark]{Theorem}
\newcommand{\bH}{\mathbb{H}}
\newcommand{\bI}{\mathbb{I}}
\newcommand{\bN}{\mathbb{N}}
\newcommand{\bR}{\mathbb{R}}
\newcommand{\bU}{\mathbb{U}}
\newcommand{\bV}{\mathbb{V}}
\newcommand{\cA}{\mathcal{A}}
\newcommand{\cC}{\mathcal{C}}
\newcommand{\cG}{\mathcal{G}}
\newcommand{\cH}{\mathcal{H}}
\newcommand{\cI}{\mathcal{I}}
\newcommand{\cN}{\mathcal{N}}
\newcommand{\cO}{\mathcal{O}}
\newcommand{\cT}{\mathcal{T}}
\newcommand{\cU}{\mathcal{U}}
\newcommand{\cV}{\mathcal{V}}
\newcommand{\cW}{\mathcal{W}}
\numberwithin{equation}{section} \errorcontextlines=0
\newcommand{\im}{\mathrm{ im \;}}
\newcommand{\sone}{SO(2)}
\newcommand{\ds}{\displaystyle}
\newcommand{\nt}{\noindent}
\newcommand{\h}{\mathbb{H}}
\newcommand{\sub}{\overline{\operatorname{sub}}}
\newcommand{\G}{\mathcal{G}}
\begin{document}

\title[Bifurcations from the orbit]{Bifurcations from the orbit of solutions of the Neumann problem}
\subjclass[2010]{Primary:  	35B32; Secondary:  	35J20.}
\keywords{Systems of elliptic equations, bifurcation theory, symmetry breaking.}

\author{Anna Go\l\c{e}biewska}
\address{Faculty of Mathematics and Computer Science\\
Nicolaus Copernicus University \\
PL-87-100 Toru\'{n} \\ ul. Chopina $12 \slash 18$ \\
Poland, ORCID 0000--0002--2417--9960}

\email{Anna.Golebiewska@mat.umk.pl}

\author{Joanna Kluczenko}
\address{Faculty of Mathematics and Computer Science, University of Warmia and Mazury \\ul.~Sloneczna 54, PL-10-710 Olsztyn, Poland,
ORCID 0000--0003--0121--1136}

\email{jgawrycka@matman.uwm.edu.pl}

\author{Piotr Stefaniak}
\address{School of Mathematics,
West Pomeranian University of Technology \\PL-70-310 Szcze\-cin, al. Piast\'{o}w $48\slash 49$, Poland, ORCID 0000--0002--6117--2573}

\email{pstefaniak@zut.edu.pl}

\numberwithin{equation}{section}
\allowdisplaybreaks
\date{\today}

\maketitle

\begin{abstract}
The purpose of this paper is to study weak solutions of a nonlinear Neumann problem considered on a ball. Assuming that the potential is invariant, we consider an orbit of critical points, i.e. we do not assume that critical points are isolated. We apply techniques of the equivariant analysis to examine bifurcations from the orbits of trivial solutions.
We formulate sufficient conditions for local and global bifurcations, in terms of the right-hand side of the system and eigenvalues of the Laplace operator.
Moreover, we characterise orbits at which the global symmetry-breaking phenomenon occurs.
\end{abstract}

\section{Introduction}

In this paper, we study bifurcations of weak solutions of elliptic systems of the form:
\begin{equation} \label{eq:neumannwstep1}
\left\{
\begin{array}{rclcl}   - \triangle u & =& \lambda \nabla F(u )   & \text{ in   } & B^N \\
                   \frac{\partial u}{\partial \nu} & =  & 0 & \text{ on    } & S^{N-1},
\end{array}\right.
\end{equation}

\nt where $B^N$ is the open unit ball in $\bR^N$, $S^{N-1}=\partial B^N$ and the function $F\colon \bR^m \to \bR$ satisfies additional assumptions, see Section 2.

In particular, we are interested in the equivariant case. Namely, we assume that on the space $\bR^m$ there is defined an action of the compact Lie group $\Gamma$ and $\nabla F$ is the $\Gamma$-equivariant mapping. Moreover, it is known that $B^N$ is $SO(N)$-invariant, where $SO(N)$ stands for the special orthogonal group in dimension $N$.

Consider the set $\nabla F^{-1}(0).$ For $u_0 \in \nabla F^{-1}(0)$ the constant function $\tilde{u}_0 \equiv u_0$ is a solution of \eqref{eq:neumannwstep1} for all $\lambda \in \bR$. Therefore, we obtain the family of trivial solutions $\{\tilde{u}_0\} \times \bR$. Investigating the change of the Conley index for different levels $\lambda \in \mathbb{R}$, one can obtain a sequence of nontrivial weak solutions bifurcating from the point $(\tilde{u}_0, \lambda_0)$, for some values $\lambda_0 \in \mathbb{R}.$ Investigating the change of the topological degree, one can prove the existence of the continuum, containing $(0, \lambda_0),$ of nontrivial weak solutions of the system (i.e. the global bifurcation of weak solutions).

For a system of elliptic differential equations with Dirichlet boundary conditions such methods have been used in many papers, among others by the first and the second author  in  \cite{GawRyb},  \cite{GolRyb1}, \cite{Klu}.  A similar method has been also used   in \cite{GolKlu} for the system with the Neumann boundary conditions with the infinity instead of the critical point.
The phenomenon of symmetry breaking for elliptic systems with the Neumann boundary conditions has been considered by the third author in \cite{Ste}.

The results described above are obtained with the assumption that $u_0$ is an isolated critical point of the potential $F$.

Assuming that $\nabla F$ is a $\Gamma$-equivariant mapping, we obtain that for $u_0 \in \nabla F^{-1}(0)$ also $\gamma u_0 \in \nabla F^{-1}(0)$ for all $\gamma \in \Gamma$. It is therefore clear, that the assumption that the critical point $u_0$ is an isolated one, does not have to be satisfied in this case.

The method, that can be used in this situation, is an investigating of the index of the isolated orbit. Under some additional assumptions, this method has been recently proposed by Perez-Chavela, Rybicki and Strzelecki in \cite{PRS}. In this paper it has been proved that the computation of the Conley index of the orbit can be in some cases reduced to computation of the index of a point from the space normal to the orbit.

To study weak solutions of the system \eqref{eq:neumannwstep1} we apply variational methods, i.e. we associate with the system a functional $\Phi$ defined on a suitable Hilbert space $\bH$. Its critical points are in one-to-one correspondence with weak solutions of \eqref{eq:neumannwstep1}. The tools we use are the finite and infinite dimensional equivariant Conley index (see \cite{[Bartsch]}, \cite{Geba} for the definition in the finite dimensional case and \cite{Izydorek} for the infinite dimensional case) and the degree for invariant strongly indefinite functionals, defined in \cite{GolRyb1}.

Consider the group $\cG=\Gamma \times SO(N).$ Since $\bR^m$ is a $\Gamma$-representation and $B^N$ is an $SO(N)$-invariant set, the space  $\bH$ is a  $\cG$-representation. It occurs that for $u_0 \in (\nabla F)^{-1}(0)$, $(g\tilde{u}_0, \lambda)$ is a critical point of $\Phi$ for all $g\in \cG, \lambda \in \bR.$

 Therefore we can consider the set of trivial solutions $\cT=\cG(\tilde{u}_0) \times \bR$. We are going to investigate bifurcations of nontrivial solutions from the family $\cT$. Our aim is to formulate necessary and sufficient conditions, in terms of the right-hand side of the system and of the eigenvalues of the Laplace operator, for a bifurcation from the orbit $\cG(\tilde{u}_0) \times\{\lambda_0\}$.

 We also consider the global symmetry-breaking phenomenon at the orbit $\cG(\tilde{u}_0) \times \{\lambda_0\}$.
More precisely, knowing that the trivial solutions are radial, we study when the bifurcating solutions are non-radial. The analogous problem has been studied by the third author in \cite{RybShiSte} and \cite{RybSte} on the sphere and on the geodesic ball, with the use of the lemma due to Dancer (see \cite{Dancer1979}), characterising isotropy groups of bifurcating solutions. In our situation, if the group $\Gamma$ is not a discrete one, we cannot use this result. Therefore we generalise it.

After this introduction the paper is organised in the following way:

In Section \ref{sec:preliminaries} we introduce the problem and recall some definitions.  With an elliptic system on a ball we associate a functional. Next we study the properties of the linear system. We end this section with the definitions of local and global bifurcations from an orbit and of the admissible pair.

In Section 3 we formulate and prove the main results of this article, namely Theorems \ref{th:BIF} and \ref{th:GLOB} concerning the local and global bifurcations of solutions, and Theorem \ref{thm:SymmBreak}, concerning the symmetry breaking problem. First we consider the phenomenon of bifurcation from the critical orbit. We start with some auxiliary results. In Lemma \ref{fact:warunekkonieczny} we describe the set of parameters at which the bifurcation of solutions can occur. In Theorem \ref{th:zmiana_indeksu} we investigate the change of the Conley index at the levels obtained in Lemma \ref{fact:warunekkonieczny}. This result is applied to prove Theorems \ref{th:BIF} and \ref{th:GLOB}. The local bifurcation of solutions, under weaker assumptions, is considered also in Theorem \ref{thm:0}. Next we study the symmetry breaking problem. In Theorem \ref{thm:SymmBreak} we prove the bifurcation of orbits of non-radial solutions emanating from orbits of radial ones. To obtain this result, we generalise the result of Dancer in Lemma \ref{lem:IsGr}.

In Section \ref{sec:illustration} we illustrate our results with a few examples. Using the properties of the eigenspaces of the Laplace operator (with the Neumann boundary conditions) on the ball, we verify assumptions of our main results.

Section \ref{sec:app} is the appendix. In the main part of our paper we assume that the reader is familiar with some classical definitions and facts, concerning for example the equivariant Conley index or the properties of eigenspaces of the Laplace operator on a ball. However, it is not easy to
find the full description of these properties. Therefore, for the completeness of the paper we collect in this section the information which we use to prove our main results. In this section we present also an equivariant version of the implicit function theorem in infinite dimensional spaces, due to Dancer.


\subsection{Notation}
Suppose that $G$ is a compact Lie group. We denote by $\sub(G)$ the set of closed subgroups of $G$. For $u$ from a given $G$-space $X$ we denote by the $G(u)$  the orbit through $u$ and $G_u$ stands for the isotropy group of $u$.

Further, by $U(G)$ we denote the Euler ring of $G$ and we use the symbol $\chi_G(\cdot)$ to denote the $G$-equivariant Euler characteristic of a pointed finite $G$-CW-complex. Moreover, the symbols $CI_{G}(S, f)$ and $\mathcal{CI}_{\cG}(S, f)$ stand for the Conley indices of an isolated invariant set $S$ of the flow generated by $f$, considered respectively in finite and infinite dimensional cases. More precise description can be found in Appendix.

Finally, for a Hilbert space $\h$ and  $u_0 \in \h$ we denote by $B_{\delta}(u_0,\h)$ (respectively $D_{\delta}(u_0,\bH)$) the open (respectively closed) ball in $\bH$ centred at $u_0$ and with radius $\delta$. In particular,  we use the symbol $B^N$ for the open ball if $\delta=1$, $u_0=0$ and $\h=\bR^N$ and we write $S^{N-1}$ for $\partial B^N.$

\section{Preliminaries}\label{sec:preliminaries}

Throughout this paper $\Gamma$ stands for a compact Lie group and $\bR^m$ is an orthogonal representation of the group $\Gamma$. Consider $F\colon \bR^m \to \bR$ satisfying:
\begin{enumerate}

\item[(B1)] $F \in C^2(\bR^m , \bR)$ is such that for every $u \in \bR^m$ we have $|\nabla^2 F(u) | \leq a + b |u|^{q}$ where $a,b \in \bR$ and $1<q < \frac{4}{N-2}$ for $N \geq 3$ and $ 1<  q< \infty $ for $N=2,$
\item[(B2)] $F$ is $\Gamma$-invariant, i.e. $F(\gamma u) =F(u)$ for every $\gamma \in \Gamma$, $u\in\bR^m$.

\end{enumerate}
Our aim is to study bifurcations of weak solutions of the nonlinear Neumann problem, parameterised by $\lambda \in \bR$,
\begin{equation} \label{eq:neumann}
\left\{
\begin{array}{rclcl}   - \triangle u & =& \lambda \nabla F(u )   & \text{ in   } & B^N \\
                   \frac{\partial u}{\partial \nu} & =  & 0 & \text{ on    } & S^{N-1}.
\end{array}\right.
\end{equation}

\nt Denote by $H^1(B^N)$ the first Sobolev space on $B^N$ and consider a separable Hilbert space $\bH=
\bigoplus_{i=1}^{m} H^1(B^N)$ with the scalar product
\begin{equation}\label{iloczyn} \ds \langle v, w \rangle_{\bH} = \ds \sum_{i=1}^m \langle v_i, w_i
\rangle_{H^1( B^N)} = \sum_{i=1}^m  \int\limits_{B^N} (\nabla v_i(x), \nabla w_i(x)) + v_i(x)\cdot w_i(x) dx. \end{equation}

Denote by $\G$ the group $\Gamma \times SO(N)$, where $SO(N)$ is the special orthogonal group in dimension $N$. Note  that the space $\h$ with the scalar product given by \eqref{iloczyn} is an orthogonal ${\G}$-representation with the ${\G}$-action given by
\begin{equation}\label{eq:action}
(\gamma, \alpha) (u)(x)= \gamma u({\alpha}^{-1}x)\  \text{ for }\  (\gamma, \alpha) \in {\G}, u \in \h, x\in B^N.
\end{equation}

 It is well known  that weak solutions of the problem \eqref{eq:neumann} are in one-to-one correspondence with critical points (with respect to $u$) of the functional $\Phi \colon \h \times \bR \rightarrow \bR$
defined by
\begin{equation}\label{eq:Phi}
\Phi (u,\lambda) = \frac{1}{2} \int\limits_{B^N} |\nabla u(x)|^2 dx- \lambda\int\limits_{B^N} F(u(x))dx.
\end{equation}
Computing the gradient of $\Phi$ with respect to $u$ we obtain:
\begin{equation}\label{eq:gradientPhi}
\langle \nabla_{u} \Phi (u,\lambda), v \rangle_{\h} =
\int\limits_{B^N} (\nabla u(x), \nabla v(x) ) - (\lambda \nabla F (u(x)), v(x) ) dx, \ u,v \in \h.
\end{equation}
Moreover,
\begin{equation*}
\begin{split}
\left<\nabla^2_u \Phi(u, \lambda)w, v\right>_{\h}=\int\limits_{B^N} ( \nabla w(x),\nabla v(x)) -(\lambda \nabla^2 F(u(x))w(x), v(x)) dx, \ u, w, v \in \h.
\end{split}
\end{equation*}

Assumption (B2) implies that $\nabla_u \Phi \colon\h \times \bR \rightarrow \h$ is ${\G}$-equivariant.

Moreover, from imbedding theorems and the assumption (B1) it follows that the operator $\nabla_u \Phi$ is a completely continuous perturbation of the identity.

\subsection{Linear equation}\label{linear}
In this subsection we consider the equation \eqref{eq:neumann} in the linear case, i.e. the system:
 \begin{equation} \label{eq:lin_neumann}
\left\{
\begin{array}{rclcl}  - \triangle u & = &\lambda A u  & \text{ in   } & B^N \\
                   \frac{\partial u}{\partial \nu} & =  & 0 & \text{ on    } & S^{N-1},
\end{array}
\right.
\end{equation}
where $A$ is a real, symmetric $(m\times m)$-matrix.

Using formula \eqref{eq:Phi} we can associate with \eqref{eq:lin_neumann} the functional   $\Phi_A \colon \h \times \bR \rightarrow \bR$ given by
\begin{equation}\label{eq:phi}
\Phi_A (u,\lambda) = \frac{1}{2} \int\limits_{B^N} |\nabla u(x)|^2 dx - \frac{\lambda}{2} \int\limits_{B^N}  (Au(x),u(x))dx.
\end{equation}
Note that from \eqref{eq:gradientPhi} for every $v\in\h$ we have \begin{equation*}
 \langle \nabla_u \Phi_A(u,\lambda),v\rangle_{\h}=  \langle u, v \rangle_{\h} - \langle L_{\lambda A} u, v \rangle_{\h},
\end{equation*}

\nt where
\begin{equation}\label{LA} \langle L_{\lambda A} u, v \rangle_{\h} = \int\limits_{B^N} (u(x), v(x)) + (\lambda A u(x), v(x)) dx.  \end{equation}
The existence and boundedness of the operator $L_{\lambda A}\colon \h \rightarrow \h$ follow from the Riesz theorem. By definition $L_{\lambda A}$ is self-adjoint.

Let us denote  by $\sigma(-\Delta; B^N) = \{ 0=
\beta_1 < \beta_2 < \ldots < \beta_k < \ldots\}$ the set of distinct
eigenvalues of the Laplace operator (with the Neumann boundary conditions) on the ball. Write $\bV_{-\Delta}(\beta_k)$ for the eigenspace of $-\Delta$ corresponding to  $\beta_k \in
\sigma(-\Delta; B^N)$. In Appendix  we give a more precise  description of these eigenspaces. By the spectral theorem it follows that $\ds H^1(B^N) = cl ( \bigoplus_{k=1}^{\infty} \bV_{-\Delta} (\beta_k)).$
Let us denote by $\h_k$ the space $\ds \bigoplus_{i=1}^m \bV_{-\Delta}(\beta_k).$
In particular, $u=\sum\limits_{k=1}^{\infty}u_k$ for every $u\in \bH$, where $u_k\in\h_k$.

Let $\alpha_1,\ldots,\alpha_m$ denote the eigenvalues of $A$ (not necessarily distinct) with corresponding eigenvectors $f_1,\ldots,f_m$, which form an orthonormal basis of $\bR^m$.

Let $\pi_j\colon\bH\to H^1(B^N)$ be a projection such that $\pi_j(u)(x)=(u(x),f_j)$, $j=1,\ldots, m$.
Clearly, if $u_k \in \bH_k,$ then $\pi_j(u_k) \in \bV_{-\Delta}(\beta_k)$ for $j=1, \ldots, m.$

In the lemma below we characterise the operator $L_{\lambda A}$, given by the formula \eqref{LA}.

\begin{Lemma}\label{operatorLlambdaA}
For every $u\in\bH$
\[
L_{\lambda A} u=\sum\limits_{k=1}^{\infty}\sum\limits_{j=1}^m \frac{1+\lambda\alpha_j}{1+\beta_k} \pi_j(u_k) \cdot f_j.
\]
\end{Lemma}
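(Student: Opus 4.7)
My plan is to diagonalise $L_{\lambda A}$ with respect to a natural orthogonal decomposition of $\bH$ and then read off the scalar on each joint eigenspace. Let $E_{k,j}:=\bV_{-\Delta}(\beta_k)\cdot f_j$ for $k\ge 1$ and $j=1,\dots,m$. These subspaces are pairwise $\bH$-orthogonal (the distinct Neumann eigenspaces of $-\Delta$ are $L^2$- and hence $H^1$-orthogonal, and $f_1,\dots,f_m$ form an orthonormal basis of $\bR^m$), and their algebraic sum is dense in $\bH$. Since $L_{\lambda A}$ and the operator given by the right-hand side of the claim are both bounded and linear, it is enough to verify the identity for $u=\phi\cdot f_j$ with $\phi\in\bV_{-\Delta}(\beta_k)$, in which case the claim reduces to
$$L_{\lambda A}(\phi\cdot f_j)=\tfrac{1+\lambda\alpha_j}{1+\beta_k}\,\phi\cdot f_j.$$

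To prove this I would test both sides against an arbitrary $v\in\bH$. Using \eqref{LA}, the eigenvalue equation $Af_j=\alpha_j f_j$, and the definition $\pi_j(v)(x)=(v(x),f_j)$, the left-hand inner product should become
$$\langle L_{\lambda A}(\phi\cdot f_j), v\rangle_{\bH}=(1+\lambda\alpha_j)\int_{B^N}\phi\,\pi_j(v)\,dx,$$
while the right-hand side pairs with $v$ as
$$\Bigl\langle \tfrac{1+\lambda\alpha_j}{1+\beta_k}\,\phi\cdot f_j,\, v\Bigr\rangle_{\bH}=\tfrac{1+\lambda\alpha_j}{1+\beta_k}\,\langle\phi,\pi_j(v)\rangle_{H^1(B^N)}.$$
Matching the two expressions reduces to the single scalar identity $\langle\phi,\psi\rangle_{H^1(B^N)}=(1+\beta_k)\int_{B^N}\phi\psi\,dx$, applied with $\psi=\pi_j(v)$.

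That last identity is the only genuine analytic input and is what I regard as the main step. It is just the weak form of the Neumann eigenvalue equation for $\phi$: Green's formula gives $\int_{B^N}(\nabla\phi,\nabla\psi)\,dx=\beta_k\int_{B^N}\phi\psi\,dx$, since the boundary term $\int_{S^{N-1}}\tfrac{\partial\phi}{\partial\nu}\psi$ vanishes by the Neumann condition on $\phi$; adding $\int\phi\psi$ to both sides yields the claimed scalar identity. The remainder of the argument is bookkeeping with orthogonality and continuity: once the diagonal formula is established on each $E_{k,j}$, the full expansion in the statement follows by linearity and $\bH$-continuity of $L_{\lambda A}$.
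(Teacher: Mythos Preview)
Your argument is correct and is precisely the standard computation the paper has in mind: the authors do not spell out a proof but refer to the analogous Lemma~3.2 in \cite{GolKlu}, where the same diagonalisation on the joint Laplace--matrix eigenspaces $\bV_{-\Delta}(\beta_k)\cdot f_j$ is carried out via the weak Neumann identity $\langle\phi,\psi\rangle_{H^1}=(1+\beta_k)\int\phi\psi$. Your reduction to elementary tensors, verification of the scalar on each $E_{k,j}$, and extension by boundedness and density is exactly that approach.
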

The proof of this lemma is standard, see for example the proof of Lemma 3.2  in \cite{GolKlu}.

Let us denote by $\sigma(L)$ the spectrum of a linear operator $L\colon \h \to \h$.
From the above lemma there  immediately follows the corollary:
\begin{Corollary}\label{spektrumLA}

Let $L_{\lambda A}$ be defined by \eqref{LA}. Then:
\[
\sigma(L_{\lambda A})=\left\{
 \frac{1+\lambda\alpha_j}{1+\beta_k}\colon \alpha_j \in\sigma(A), \beta_k\in \sigma(-\Delta; B^N)
\right\}.
\]
Moreover,
\[
\sigma(Id-L_{\lambda A})=\left\{
 \frac{\beta_k-\lambda\alpha_j}{1+\beta_k}\colon \alpha_j \in\sigma(A), \beta_k\in \sigma(-\Delta; B^N)
\right\}.
\]

\end{Corollary}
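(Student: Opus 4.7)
The plan is to read off the spectrum of $L_{\lambda A}$ directly from Lemma \ref{operatorLlambdaA}, by exhibiting $\bH$ as an orthogonal Hilbert direct sum of invariant subspaces on which $L_{\lambda A}$ acts as scalar multiplication. This identifies a complete system of eigenvalues, and self-adjointness (already observed after \eqref{LA}) then rules out any additional spectrum.

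First, I would fix $k \geq 1$ and $j \in \{1,\ldots,m\}$ and introduce the subspace
\[
W_{k,j} := \{\varphi \cdot f_j : \varphi \in \bV_{-\Delta}(\beta_k)\} \subset \bH_k.
\]
The eigenspaces $\bV_{-\Delta}(\beta_k)$ are pairwise orthogonal in $H^1(B^N)$ by the spectral theorem for $-\Delta$ with Neumann boundary conditions (recalled just above Lemma \ref{operatorLlambdaA}), and $\{f_1,\ldots,f_m\}$ is an orthonormal basis of $\bR^m$. Combined, this produces an orthogonal Hilbert space decomposition $\bH = \overline{\bigoplus_{k,j} W_{k,j}}$. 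By the formula of Lemma \ref{operatorLlambdaA}, $L_{\lambda A}$ preserves every $W_{k,j}$ and acts on it as multiplication by the scalar $\mu_{k,j} := \frac{1+\lambda \alpha_j}{1+\beta_k}$.

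Consequently, each $\mu_{k,j}$ is an eigenvalue of $L_{\lambda A}$, with any nonzero element of $W_{k,j}$ as eigenvector, and the $W_{k,j}$ furnish a complete system of eigenvectors of the self-adjoint operator $L_{\lambda A}$, so no further numbers belong to its spectrum; this yields the first equality. The second equality then follows by the elementary spectral mapping $\sigma(Id - L_{\lambda A}) = \{1 - \mu : \mu \in \sigma(L_{\lambda A})\}$ combined with the arithmetic identity $1 - \frac{1+\lambda\alpha_j}{1+\beta_k} = \frac{\beta_k - \lambda\alpha_j}{1+\beta_k}$.

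No step presents a genuine obstacle: the corollary is essentially a bookkeeping consequence of Lemma \ref{operatorLlambdaA}. The one point worth emphasising is that the completeness of the decomposition $\bH = \overline{\bigoplus_{k,j} W_{k,j}}$ relies on both the spectral decomposition of the Neumann Laplacian on $B^N$ and the diagonalisation of the symmetric matrix $A$ in the basis $\{f_1,\ldots,f_m\}$; without either ingredient the exhaustion of the spectrum would be incomplete.
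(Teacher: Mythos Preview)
Your proposal is correct and matches the paper's approach: the paper does not give a separate proof but simply states that the corollary follows immediately from Lemma \ref{operatorLlambdaA}, and your argument is precisely the natural unpacking of that remark. The only minor caveat is your sentence ``no further numbers belong to its spectrum'': strictly speaking, for a bounded self-adjoint operator diagonalised by a complete orthonormal system the spectrum is the \emph{closure} of the set of eigenvalues, so the accumulation point $0$ (arising as $\beta_k\to\infty$) also lies in $\sigma(L_{\lambda A})$; this is harmless for every use of the corollary in the paper, and the paper's own statement is equally informal on this point.
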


Fix eigenvalues $\alpha_{j_0}\in \sigma(A)$ and $\beta_{k_0}\in\sigma(-\Delta; B^N)$. Let $\bV_A(\alpha_{j_0})$ be the eigenspace associated with the eigenvalue $\alpha_{j_0}$ and $\mu_{A}(\alpha_{j_0}) = \dim \bV_A(\alpha_{j_0})$. Let $\Pi_{j_0}\colon \bR^m\to\bR^m$ be an orthogonal projection such that $\Pi_{j_0}(\bR^m)=\bV_A(\alpha_{j_0})$ and define $\tilde{\Pi}_{j_0}\colon \bH\to \bH$ by $(\tilde{\Pi}_{j_0}(u))(x)=\Pi_{j_0}(u(x))$. Denote
\[
\bV_{-\Delta}(\beta_{k_0})^{\mu_{A}(\alpha_{j_0})}=\tilde{\Pi}_{j_0}\left(\bigoplus\limits_{j=1}^m\bV_{-\Delta}(\beta_{k_0}) \right).
\]
It follows that
\[
\bV_{-\Delta}(\beta_{k_0})^{\mu_{A}(\alpha_{j_0})}=
\mathrm{span}\left\{
h\cdot f\colon h\in \bV_{-\Delta}(\beta_{k_0}), f\in \bV_A(\alpha_{j_0})
\right\}\subset\bH.
\]

From Lemma \ref{operatorLlambdaA} we obtain:

\begin{Corollary}\label{cor:kernel}
 If $\sigma(\lambda A)\cap \sigma(-\Delta; B^N)=\{\alpha_{j_1},\ldots, \alpha_{j_s}\}$, then
\[
\ker( Id - L_{\lambda A})=\bV_{-\Delta}(\alpha_{j_1})^{\mu_{\lambda A}(\alpha_{j_1})}\oplus\ldots\oplus
\bV_{-\Delta}(\alpha_{j_s})^{\mu_{\lambda A}(\alpha_{j_s})}.
\]

\end{Corollary}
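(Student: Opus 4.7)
The plan is to read the claim straight out of Lemma \ref{operatorLlambdaA} by computing $(Id - L_{\lambda A})u$ coordinate by coordinate in the orthogonal decomposition of $\bH$ provided by the joint spectral resolution of $-\Delta$ and $A$.

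First I would write any $u\in\bH$ as $u=\sum_{k=1}^{\infty}u_k$ with $u_k\in\bH_k$, and then split each $u_k$ along the orthonormal basis $f_1,\ldots,f_m$ of $\bR^m$, obtaining $u_k=\sum_{j=1}^m \pi_j(u_k)\cdot f_j$ with $\pi_j(u_k)\in\bV_{-\Delta}(\beta_k)$. Applying Lemma \ref{operatorLlambdaA} to $L_{\lambda A}u$ and subtracting from $u$ I would obtain
\[
(Id-L_{\lambda A})u=\sum_{k=1}^{\infty}\sum_{j=1}^m \frac{\beta_k-\lambda\alpha_j}{1+\beta_k}\,\pi_j(u_k)\cdot f_j,
\]
which is the expansion already anticipated by Corollary \ref{spektrumLA}.

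Next I would argue that, because $\{f_j\}$ is an orthonormal basis of $\bR^m$ and the eigenspaces $\bV_{-\Delta}(\beta_k)$ are mutually orthogonal in $H^1(B^N)$, the summands above lie in pairwise orthogonal subspaces of $\bH$. Consequently $(Id-L_{\lambda A})u=0$ if and only if $\tfrac{\beta_k-\lambda\alpha_j}{1+\beta_k}\pi_j(u_k)=0$ for every pair $(k,j)$, i.e. $\pi_j(u_k)=0$ whenever $\lambda\alpha_j\neq\beta_k$. Thus the only pairs $(k,j)$ that can contribute to a kernel element are those for which $\lambda\alpha_j=\beta_k$, meaning $\lambda\alpha_j\in\sigma(\lambda A)\cap\sigma(-\Delta;B^N)=\{\alpha_{j_1},\ldots,\alpha_{j_s}\}$.

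Finally, I would collect contributions. For each $\alpha_{j_i}$ in the intersection the surviving indices are $j$ such that $\lambda\alpha_j=\alpha_{j_i}$ (so $f_j\in \bV_{\lambda A}(\alpha_{j_i})$) and $k$ such that $\beta_k=\alpha_{j_i}$ (so $\pi_j(u_k)\in\bV_{-\Delta}(\alpha_{j_i})$); summing over such $j$ yields precisely an element of $\mathrm{span}\{h\cdot f\colon h\in \bV_{-\Delta}(\alpha_{j_i}),\, f\in\bV_{\lambda A}(\alpha_{j_i})\}=\bV_{-\Delta}(\alpha_{j_i})^{\mu_{\lambda A}(\alpha_{j_i})}$. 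Orthogonality across distinct $\alpha_{j_i}$ makes the sum direct, which gives the claimed decomposition; the converse inclusion is immediate since any such element is annihilated term by term. The only real care needed is the double bookkeeping—verifying that an index $\alpha_{j_i}\in\sigma(\lambda A)\cap\sigma(-\Delta;B^N)$ plays both roles (as $\lambda\alpha_j$ picking out $\bV_{\lambda A}(\alpha_{j_i})$, and as $\beta_k$ picking out $\bV_{-\Delta}(\alpha_{j_i})$)—but no deeper obstacle arises.
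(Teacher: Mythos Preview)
Your proposal is correct and is exactly the argument the paper has in mind: the paper simply declares the corollary as an immediate consequence of Lemma~\ref{operatorLlambdaA} (``From Lemma~\ref{operatorLlambdaA} we obtain:''), and you have written out precisely those details---expanding $(Id-L_{\lambda A})u$ in the joint eigenbasis, using orthogonality to isolate coefficients, and matching the surviving terms with the spaces $\bV_{-\Delta}(\alpha_{j_i})^{\mu_{\lambda A}(\alpha_{j_i})}$.
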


\subsection{Notion of the bifurcation from the critical orbit.}\label{subsec:bifurcation}
Fix $u_0 \in (\nabla F)^{-1}(0)$. Since $F$ is $\Gamma$-invariant, and therefore $\nabla F$ is $\Gamma$-equivariant, $\gamma u_0 \in (\nabla F)^{-1}(0)$ for all $\gamma \in \Gamma$, i.e. $\Gamma(u_0)  \subset (\nabla F)^{-1}(0).$ We  call such a set a critical orbit of $F$.

Note that $T_{u_0} \Gamma (u_0) \subset \ker \nabla^2 F (u_0)$ and therefore $\dim \ker \nabla^2 F (u_0) \geq \dim T_{u_0} \Gamma (u_0) = \dim \Gamma (u_0) .$ We  assume that in this inequality there holds:  \begin{equation}\label{eq:orbita} \dim \ker \nabla^2 F (u_0) = \dim \Gamma (u_0).\end{equation}
We  call such an orbit a non-degenerate one.

By the equivariant Morse lemma, see \cite{[Wass]}, from  \eqref{eq:orbita} we conclude that $\Gamma (u_0)$ is isolated in $(\nabla F)^{-1}(0).$

Since $u_0 \in (\nabla F)^{-1}(0)$, a constant function $\tilde{u}_0\equiv u_0$ is a solution of the problem \eqref{eq:neumann} for all $\lambda \in \mathbb{R}$. Therefore,  $(\tilde{u}_0, \lambda)$, and consequently $(\gamma\tilde{u}_0, \lambda)$ for every $\gamma\in \Gamma$, is a critical point of the functional $\Phi$ given by \eqref{eq:Phi}. Since from \eqref{eq:action} we have $\G(\tilde{u}_0)=\Gamma(\tilde{u}_0)$, we obtain a critical orbit of $\Phi$ and therefore a ${\G}$-orbit of weak solutions of \eqref{eq:neumann} for all $\lambda \in \mathbb{R}$. Hence we can
consider a family of solutions $\cT = {\G}(\tilde{u}_0) \times \bR \subset \bH \times \bR$. We call the elements of $\cT$ the trivial solutions of \eqref{eq:neumann}. Put
$\cN=\{(v, \lambda) \in (\bH \times \bR) \setminus \cT\colon \nabla_v\Phi(v, \lambda)=0\}.$

\begin{Definition}
A local bifurcation from the orbit ${\G}(\tilde{u}_0) \times \{\lambda_0\} \subset \cT$ of solutions of \eqref{eq:neumann} occurs if the point $(\tilde{u}_0, \lambda_0)$ is an accumulation point of the set $\cN$.
\end{Definition}

\begin{Remark} Note that if $(\tilde{u}_0, \lambda_0)$ is an accumulation point of $\cN$ then for all $g \in \G$, $(g\tilde{u}_0, \lambda_0)$ is also an accumulation point. Therefore $\cG(\tilde{u}_0)\subset cl(\cN)$.
\end{Remark}

\begin{Definition}
A global bifurcation from the orbit ${\G}(\tilde{u}_0) \times \{\lambda_0\} \subset \cT$ of solutions of \eqref{eq:neumann} occurs if there is a connected component $\cC(\lambda_0)$ of $cl (\cN)$ such that either $\cC(\lambda_0)\cap (\cT \setminus ({\G}(\tilde{u}_0)\times \{\lambda_0\}))\neq \emptyset$ or $\cC(\lambda_0)$ is unbounded.
\end{Definition}

The set of all $\lambda_0 \in \bR$ such that a local (respectively global) bifurcation from  the orbit ${\G}(\tilde{u}_0) \times \{\lambda_0\} $ occurs we denote by $BIF$ (respectively $GLOB$).
Note that directly from the above definitions it follows that $GLOB \subset BIF.$

\subsection{Admissible pair}\label{subsec:adm} The notion of the admissible pair has been introduced in \cite{PRS}.
Fix a compact Lie group $G$ and  let $H\in\sub(G)$.
Denote by $(H)_G$ the conjugacy class of~$H.$

\begin{Definition}
 A pair $(G,H)$ is called admissible, if for any $K_1,K_2\in\sub(H)$ the following condition is satisfied: if $(K_1)_H\neq (K_2)_H$, then $(K_1)_G\neq (K_2)_G$.
\end{Definition}

\begin{Lemma}\label{admissible}
The pair $(\Gamma\times SO(N), \{e\}\times SO(N))$ is admissible.
\end{Lemma}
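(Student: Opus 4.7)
The plan is to prove the contrapositive: assume $K_1,K_2\in\sub(H)$ are conjugate in $G=\Gamma\times SO(N)$, and then show they are already conjugate in $H=\{e\}\times SO(N)$. Since every subgroup of $H$ has the product form, I would begin by writing $K_i=\{e\}\times L_i$ for uniquely determined closed subgroups $L_1,L_2\le SO(N)$, so that the statement to prove reduces to: if $L_1$ and $L_2$ are conjugate in $SO(N)$ via some element arising from a $G$-conjugation, then they are conjugate in $SO(N)$.

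Next I would just compute the conjugation in the product group explicitly. Given $(\gamma,\alpha)\in G$ with $(\gamma,\alpha)K_1(\gamma,\alpha)^{-1}=K_2$, a general element of $K_1$ is $(e,\ell)$ with $\ell\in L_1$, and
\[
(\gamma,\alpha)(e,\ell)(\gamma^{-1},\alpha^{-1})=(\gamma e\gamma^{-1},\alpha\ell\alpha^{-1})=(e,\alpha\ell\alpha^{-1}).
\]
Hence the conjugate set is $\{e\}\times \alpha L_1\alpha^{-1}$, which equals $K_2=\{e\}\times L_2$ precisely when $\alpha L_1\alpha^{-1}=L_2$. Since $\alpha\in SO(N)$, the element $(e,\alpha)$ lies in $H$ and conjugates $K_1$ to $K_2$ inside $H$, so $(K_1)_H=(K_2)_H$. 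Contrapositively, $(K_1)_H\ne(K_2)_H$ forces $(K_1)_G\ne(K_2)_G$, which is admissibility.

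There is no genuine obstacle here; the lemma is essentially a bookkeeping observation about the product structure. The only point that might deserve a brief comment is why every closed subgroup of $\{e\}\times SO(N)$ is automatically of the form $\{e\}\times L$, but this is immediate from the fact that the first coordinate of any such subgroup must be $\{e\}$. With that remark the argument above is complete.
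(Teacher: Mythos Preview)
Your proof is correct and follows essentially the same route as the paper: both arguments prove the contrapositive by writing the subgroups of $\{e\}\times SO(N)$ as $\{e\}\times L_i$, computing the conjugation in the product group componentwise, and observing that the $\Gamma$-component is irrelevant so that $(e,\alpha)\in H$ already conjugates one subgroup to the other.
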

\begin{proof}
Let us denote by H the group $\{e\} \times SO(N)$ and recall that $\G=\Gamma \times SO(N).$
Moreover, let $\tilde{K}_1, \tilde{K}_2 \in \sub(H).$ By definition of $H$ there are  $K_1, K_2 \in \sub(SO(N))$ such that $\tilde{K}_1 = \{e\} \times K_1$ and
$\tilde{K}_2 = \{e\} \times K_2$.
Suppose that
$(\tilde{K}_1)_{\G}= (\tilde{K}_2)_{\G}$, i.e. $(\{e\} \times K_1)_{\G} =(\{e\} \times K_2)_{\G}.$ Therefore there exists ${(\gamma, \alpha )\in {\G}}$ such that $\{e\} \times K_1 = (\gamma, \alpha )(\{e\} \times K_2)(\gamma, \alpha )^{-1}$ and hence
\begin{eqnarray*}
\{e\} \times K_1=\{\gamma e  \gamma^{-1}\} \times \alpha  K_2 \alpha ^{-1} = \{e\} \times \alpha  K_2 \alpha ^{-1}= (e,\alpha ) (\{e\} \times K_2)(e,\alpha )^{-1}.
\end{eqnarray*}
Thus $(\tilde{K}_1)_H = (\tilde{K}_2)_H$ and the proof is complete. \end{proof}

\section{Main Results}\label{sec:main}

Consider the nonlinear system \eqref{eq:neumann} with a potential $F$ satisfying (B1), (B2). Fix $u_0 \in (\nabla F)^{-1}(0)$ such that the orbit $\Gamma(u_0)$ is non-degenerate. We put two additional assumptions:

\begin{enumerate}
\item[(B3)] $F(u) = \frac{1}{2} ( A (u-u_0), u-u_0 ) + g(u-u_0),$ where
$A $ is a real symmetric $(m\times m)$-matrix and  $\nabla g (u) = o(|u|)$ for $|u| \rightarrow 0$,

\item[(B4)] $\Gamma_{u_0}=\{e\}.$
\end{enumerate}

From the assumption (B3) we conclude that the gradient of the functional associated with the equation
\eqref{eq:neumann} has the following form:
$$\nabla_u \Phi (u, \lambda)= u - \tilde{u}_0 - L_{\lambda A}(u-\tilde{u}_0)+\lambda \nabla \eta(u-\tilde{u}_0),$$
where
$L_{\lambda A}\colon\h \rightarrow \h$ is a $\G$-equivariant operator given by \eqref{LA}. Moreover, $\nabla \eta\colon\h \rightarrow \h$ given by $\langle \nabla \eta(u), v\rangle_{\h}= \int_{B^N}(\nabla g(u(x)),v(x)) dx$ is   a $\G$-equivariant operator such that $\nabla \eta(u)=o(|u|_{\h})$ for $|u|_{\h}\rightarrow 0.$

From the assumption (B4) it follows that $\G_{\tilde{u}_0}=\{e\} \times SO(N).$

\subsection{Bifurcation from the critical orbit.}\label{bifurcations}

Following the standard notation we denote the linear part of $\nabla_u \Phi(\cdot, \lambda)$ at $\tilde{u}_0$ by $\nabla^2_u \Phi(\tilde{u}_0, \lambda),$ thus $\nabla^2_u \Phi(\tilde{u}_0, \lambda)u=u - L_{\lambda A} u$.

Let us denote by $\Lambda$ the set
$\bigcup_{\alpha_j \in \sigma(A) \setminus\{0\}} \bigcup_{\beta_k \in \sigma(-\Delta;B^N)} \{\frac{\beta_k}{\alpha_j}\}.$

\begin{Lemma}\label{fact:warunekkonieczny}
If $\lambda_0 \in BIF,$ then $\lambda_0 \in \Lambda.$
\end{Lemma}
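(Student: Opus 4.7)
The plan is to prove the contrapositive: assuming $\lambda_0 \notin \Lambda$, I would show that $(\tilde u_0, \lambda_0)$ is not an accumulation point of $\cN$, so $\lambda_0 \notin BIF$. The argument proceeds in three stages: compute $\ker \nabla^2_u \Phi(\tilde u_0, \lambda_0) = \ker(Id - L_{\lambda_0 A})$; identify this kernel with the tangent space $T_{\tilde u_0}\G(\tilde u_0)$ of the critical orbit; then invoke Dancer's equivariant implicit function theorem, recorded in the appendix, to conclude that zeros of $\nabla_u \Phi$ near $(\tilde u_0,\lambda_0)$ form only the trivial family $\cT$.

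For the first stage I would apply Corollary \ref{cor:kernel}. It expresses $\ker(Id - L_{\lambda_0 A})$ as the direct sum of eigenspaces $\bV_{-\Delta}(\beta_k)^{\mu_A(\alpha_j)}$ taken over pairs $(\alpha_j, \beta_k) \in \sigma(A) \times \sigma(-\Delta; B^N)$ with $\lambda_0 \alpha_j = \beta_k$. Since $\lambda_0 \notin \Lambda$, no pair with $\alpha_j \neq 0$ satisfies this relation, so only the pair $(\alpha_j,\beta_k) = (0,0)$ can contribute; hence
\[
\ker(Id - L_{\lambda_0 A}) = \bV_{-\Delta}(0)^{\mu_A(0)}.
\]

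For the second stage I would observe that $\bV_{-\Delta}(0)$ consists of the constant functions on $B^N$, while $\bV_A(0) = \ker A = \ker \nabla^2 F(u_0)$. The non-degeneracy hypothesis \eqref{eq:orbita} identifies the latter with $T_{u_0} \Gamma(u_0)$. Using \eqref{eq:action} and (B4), the orbit $\G(\tilde u_0)$ equals $\Gamma(\tilde u_0)$, the set of constant functions with values in $\Gamma(u_0)$, so its tangent space at $\tilde u_0$ is precisely the space of constant $T_{u_0}\Gamma(u_0)$-valued functions. This matches $\bV_{-\Delta}(0)^{\mu_A(0)}$ exactly, yielding $\ker \nabla^2_u \Phi(\tilde u_0, \lambda_0) = T_{\tilde u_0}\G(\tilde u_0)$.

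For the third stage I would note that $\nabla_u \Phi$ is a $\G$-equivariant compact perturbation of the identity (by the imbedding theorems and (B1)), so $\nabla^2_u \Phi(\tilde u_0, \lambda_0)$ is Fredholm of index zero with kernel equal to the tangent space of the orbit of zeros through $\tilde u_0$. Dancer's equivariant IFT then supplies open neighbourhoods $U \ni \tilde u_0$ in $\h$ and $(\lambda_0 - \delta, \lambda_0 + \delta) \ni \lambda_0$ such that every zero of $\nabla_u \Phi(\cdot,\cdot)$ in $U \times (\lambda_0 - \delta, \lambda_0 + \delta)$ lies on $\cT$. Consequently $(\tilde u_0, \lambda_0)$ is not an accumulation point of $\cN$, so $\lambda_0 \notin BIF$. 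The principal obstacle is the second-stage identification: one has to use the non-degeneracy hypothesis \eqref{eq:orbita} in an essential way to rule out any extra kernel beyond the orbit directions, after which Dancer's theorem finishes the argument mechanically.
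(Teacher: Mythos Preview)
Your proposal is correct and follows essentially the same approach as the paper: both argue the contrapositive via the equivariant implicit function theorem, using the non-degeneracy hypothesis \eqref{eq:orbita} to identify $\ker(Id-L_{\lambda_0 A})$ with the tangent space to the orbit when $\lambda_0\notin\Lambda$. The paper phrases the key step as a dimension comparison ($\dim\ker\nabla^2_u\Phi(\tilde u_0,\lambda_0)=\dim\G(\tilde u_0)$) rather than your explicit identification of the kernel with the space of constant $T_{u_0}\Gamma(u_0)$-valued functions, but the content is the same.
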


\begin{proof}
We first observe that for all $\lambda \in \bR$, since $\G(\tilde{u}_0)$ is a critical orbit of $\Phi(\cdot,\lambda)$, we have $\dim \ker \nabla^2_u\Phi(\tilde{u}_0,\lambda)\geq \dim (\G(\tilde{u}_0)\times\{\lambda\})$.

Moreover if $\lambda_0 \in BIF$, this inequality is a strict one. Indeed, if $\dim \ker \nabla^2_u\Phi(\tilde{u}_0,\lambda_0)= \dim (\G(\tilde{u}_0)\times\{\lambda_0\})$, then by the equivariant implicit function theorem (see Theorem \ref{G-ImplicitInfinite}) there exists  $\varepsilon >0$ such that the only solutions of the equation
$\nabla_u \Phi (u, \lambda) = 0$ are elements of ${\G}(\tilde{u}_0) \times \{\lambda\}$ for $\lambda \in (\lambda_0 - \varepsilon, \lambda_0 + \varepsilon).$  From this we obtain $\lambda_0 \not \in BIF.$ Therefore, if $\lambda_0 \in BIF,$
\begin{equation}\label{eq:warunek_dostateczny} \dim \ker \nabla^2_u \Phi (\tilde{u}_0, \lambda_0) > \dim ({\G}(\tilde{u}_0) \times \{\lambda_0\}).\end{equation}

Since $\G(\tilde{u}_0)=\Gamma(\tilde{u}_0),$ we conclude from \eqref{eq:orbita} and \eqref{eq:warunek_dostateczny} that $\ds \dim \ker \nabla^2_u \Phi (\tilde{u}_0, \lambda_0) > \dim ({\G}(\tilde{u}_0) \times \{\lambda_0\})=
\dim \ker \nabla^2 F (u_0),$ i.e.
$\dim \ker \left(Id - L_{\lambda_0 A}\right) > \dim \ker A.$
Using Corollary \ref{spektrumLA} we obtain that this condition is satisfied if and only if $\{(\alpha_j, \beta_k) \in \sigma(A) \times \sigma(-\Delta; B^N)\colon \beta_k = \lambda_0 \alpha_j\} \neq \{(0,0)\}.$
Therefore there are $(\alpha_j, \beta_k) \in \sigma(A)\setminus\{0\} \times \sigma(-\Delta,B^N)$ such that $\beta_k=\lambda_0\cdot \alpha_j$, i.e. $\lambda_0 \in \Lambda.$
\end{proof}

Fix $\lambda_0 \in \Lambda $ and choose $\varepsilon>0$  such that $\Lambda\cap[\lambda_0-\varepsilon,\lambda_0+\varepsilon]=\{\lambda_0\}$.
From the definition of $\Lambda$ such a choice is always possible.

Since $\lambda_0 \pm \varepsilon \notin \Lambda,$ Lemma \ref{fact:warunekkonieczny} implies that $\lambda_0\pm \varepsilon \notin BIF$ and therefore $\G(\tilde{u}_0) \subset \bH$ is an isolated critical orbit of the $\G$-invariant functionals $\Phi(\cdot, \lambda_0 \pm \varepsilon) \colon \bH \to \bR.$ From this and the properties of flows induced by gradient operators, we conclude that $\G(\tilde{u}_0)$ is also an isolated invariant set (in the sense of the equivariant Conley index theory, see \cite{Izydorek}) for the flows induced by the operators $-\nabla_u \Phi(\cdot, \lambda_0 \pm \varepsilon) $.
Therefore, the indices $CI_{{\G}}({\G}(\tilde{u}_0),-\nabla_u\Phi(\cdot, \lambda_0-\varepsilon))$, $CI_{{\G}}({\G}(\tilde{u}_0),-\nabla_u\Phi(\cdot, \lambda_0+\varepsilon))$ are well-defined. In the following we study when they are not equal.

Assume that $\sigma(\lambda_0 A)\cap \sigma(-\Delta; B^N)=\{\alpha_{j_1},\ldots, \alpha_{j_s}\}.$ We consider the conditions:
\begin{itemize}
\item[(C1)]\label{char1} $\lambda_0\neq 0$ and there is $i\in \{1,\ldots,s\}$ satisfying $\dim\bV_{-\Delta}(\alpha_{j_i})>1$,
\item[(C2)]\label{char2} $\lambda_0\neq 0$,  $\dim\bV_{-\Delta}(\alpha_{j_i})=1$ for every $i\in \{1,\ldots,s\}$  and $\dim \ker (Id-L_{\lambda_0A}) - \dim \ker A$ is an odd number,
\item[(C3)]\label{char3} $\lambda_0= 0$ and $\sum_{\alpha \in \sigma_+(A)} \mu_A(\alpha)-\sum_{\alpha \in \sigma_-(A)} \mu_A(\alpha)$ is odd.
\end{itemize}

\begin{Remark}
Note that we can reformulate conditions (C1)--(C3) in the following way:
\begin{enumerate}
\item[(C1')] $\lambda_0\neq 0$ and there is $i\in \{1,\ldots,s\}$ such that $\bV_{-\Delta}(\alpha_{j_i})$ is a nontrivial $SO(N)$-representation,
\item[(C2')] $\lambda_0\neq 0$,  $\dim\bV_{-\Delta}(\alpha_{j_i})=1$ for every $i\in \{1,\ldots,s\}$  and $\sum^s_{i=1} \mu_{\lambda_0 A}(\alpha_{j_i})-\mu_A(0)$ is odd,
\item[(C3')] $\lambda_0=0$ and $m-\dim \ker A$ is odd.
\end{enumerate}
Indeed,
\begin{enumerate}
\item $\dim\bV_{-\Delta}(\alpha_{j_i})>1$ if and only if $\bV_{-\Delta}(\alpha_{j_i})$ is a nontrivial $SO(N)$-representation, see Remark \ref{rem:nontriviality};
\item since $\dim\bV_{-\Delta}(\alpha_{j_i})=1$, from Corollary \ref{cor:kernel} we obtain $\dim \ker (Id-L_{\lambda_0A})=\sum^s_{i=1} \mu_{\lambda_0 A}(\alpha_{j_i})$;
\item since $\sum_{\alpha \in \sigma_+(A)} \mu_A(\alpha)+\sum_{\alpha \in \sigma_-(A)} \mu_A(\alpha)+\mu_A(0)=m$, if $m-\dim \ker A$ is odd, then so is $\sum_{\alpha \in \sigma_+(A)} \mu_A(\alpha)-\sum_{\alpha \in \sigma_-(A)} \mu_A(\alpha)$.
\end{enumerate}
\end{Remark}

\begin{Theorem}\label{th:zmiana_indeksu}
Assume that $\lambda_0 \in \Lambda $ and one of the conditions (C1)--(C3) is satisfied. Then $$\mathcal{CI}_{{\G}}({\G}(\tilde{u}_0),-\nabla_u\Phi(\cdot, \lambda_0-\varepsilon)) \neq \mathcal{CI}_{{\G}}({\G}(\tilde{u}_0),-\nabla_u\Phi(\cdot, \lambda_0+\varepsilon)).$$
\end{Theorem}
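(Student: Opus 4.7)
The plan is to apply the reduction principle of Perez-Chavela, Rybicki and Strzelecki \cite{PRS}, which for an admissible pair $(G,H)$ expresses the $G$-equivariant Conley index of an isolated critical orbit with isotropy $H$ in terms of the $H$-equivariant Conley index of the base point in a normal slice. By Lemma \ref{admissible} the pair $(\G,\{e\}\times SO(N))$ is admissible, and by assumption (B4) we have $\G_{\tilde{u}_0}=\{e\}\times SO(N)$, so comparing the two $\G$-equivariant indices $\mathcal{CI}_{\G}(\G(\tilde{u}_0),-\nabla_u\Phi(\cdot,\lambda_0\pm\varepsilon))$ reduces to comparing the corresponding $SO(N)$-equivariant Conley indices of $\tilde{u}_0$ inside a $\G_{\tilde{u}_0}$-invariant normal slice $\cN\subset\bH$. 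The infinite-dimensional Conley index of \cite{Izydorek} is available here because $\nabla_u\Phi(\cdot,\lambda)$ is a completely continuous perturbation of the identity, so the flow is in the compact-perturbation-of-identity regime and no further regularisation is needed.

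Once on the slice, I would compute the two indices by linearisation. The Hessian on $\cN$ equals $Id-L_{\lambda A}$, which is non-degenerate for $\lambda=\lambda_0\pm\varepsilon$ by the choice of $\varepsilon$ together with \eqref{eq:orbita}. Hence $\tilde{u}_0$ is a non-degenerate critical point of $\Phi(\cdot,\lambda_0\pm\varepsilon)|_{\cN}$ and its $SO(N)$-equivariant Conley index equals the representation sphere $S^{V^-(\lambda_0\pm\varepsilon)}$, where $V^-(\lambda)\subset\cN$ denotes the negative-eigenvalue subspace of $Id-L_{\lambda A}|_{\cN}$ endowed with the $SO(N)$-action inherited from \eqref{eq:action}. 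Using Corollaries \ref{spektrumLA} and \ref{cor:kernel} I would decompose $V^-(\lambda)$ into joint eigenspaces of $-\Delta$ and $A$, and observe that only the blocks $\bV_{-\Delta}(\alpha_{j_i})^{\mu_{\lambda_0A}(\alpha_{j_i})}$ with $\alpha_{j_i}\in\sigma(\lambda_0A)\cap\sigma(-\Delta;B^N)$ change membership in $V^-(\lambda)$ as $\lambda$ crosses $\lambda_0$; the direction of each crossing is fixed by $\mathrm{sign}(\alpha_{j_i})$, since the eigenvalue $(\beta_k-\lambda\alpha_j)/(1+\beta_k)$ has $\lambda$-derivative $-\alpha_j/(1+\beta_k)$. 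Note that $V^-(\lambda)$ itself is finite-dimensional because only finitely many eigenvalues of $Id-L_{\lambda A}$ are negative.

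It remains to verify that this change in $V^-(\lambda)$ produces non-isomorphic $SO(N)$-equivariant Conley indices in each of the three regimes. Under (C1), the equivalent formulation (C1') guarantees that at least one of the crossing blocks $\bV_{-\Delta}(\alpha_{j_i})$ is a non-trivial $SO(N)$-representation, so the spheres $S^{V^-(\lambda_0\pm\varepsilon)}$ acquire different isotropy stratifications and cannot be $SO(N)$-homotopy equivalent. Under (C2) or (C3), every crossing block is one-dimensional and carries the trivial $SO(N)$-action, so both indices are ordinary spheres $S^{d(\lambda_0\pm\varepsilon)}$ with trivial action; the parity conditions, transcribed through (C2') and (C3'), force $d(\lambda_0+\varepsilon)-d(\lambda_0-\varepsilon)$ to be odd, and this separates the indices by their Euler characteristic $1+(-1)^{d}$.

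The main obstacle will be making the PRS slice reduction precise in the present infinite-dimensional, strongly indefinite setting and confirming that an $SO(N)$-equivariant inequality of the slice indices pulls back to a $\G$-equivariant inequality of the orbit indices. Once that identification is in place, the analysis of the three cases is essentially bookkeeping on the $SO(N)$-representation content of the crossing eigenspaces, which is exactly what the alternative formulations (C1')--(C3') of the hypotheses are designed to isolate.
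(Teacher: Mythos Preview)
Your overall strategy matches the paper's: reduce via the admissible pair $(\G,\{e\}\times SO(N))$ to an $SO(N)$-equivariant problem on the normal slice, linearise there, and identify the Conley indices with representation spheres $S^{V^-(\lambda_0\pm\varepsilon)}$. The paper carries out the finite-dimensional approximation explicitly (passing to $\bH^n$ and showing stabilisation for $n\ge n_0$) before invoking Fact~\ref{cor:3.2}, which as stated applies only on $\bR^n$; you flag this as ``the main obstacle'' but should be aware that it is not optional scaffolding---it is how the PRS reduction is made rigorous here.

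There is, however, a genuine error in your treatment of case (C2). You assert that under (C2) ``both indices are ordinary spheres $S^{d(\lambda_0\pm\varepsilon)}$ with trivial action'' and then distinguish them by ordinary Euler characteristic. This is false: the hypothesis (C2) only says that the \emph{crossing} blocks $\bV_{-\Delta}(\alpha_{j_i})$ are one-dimensional, not that the entire negative eigenspace $V^-(\lambda_0\pm\varepsilon)$ is a trivial $SO(N)$-representation. For $\lambda_0>0$, the space $V^-(\lambda_0-\varepsilon)$ contains every block $\bV_{-\Delta}(\beta_k)^{\mu_A(\alpha_j)}$ with $\beta_k<(\lambda_0-\varepsilon)\alpha_j$, and such $\beta_k$ may well satisfy $\dim\bV_{-\Delta}(\beta_k)>1$, hence carry a nontrivial $SO(N)$-action. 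The paper circumvents this by working with the $U(SO(N))$-valued Euler characteristic $\chi_{SO(N)}$: one has $\cW(\lambda_0+\varepsilon)=\cW(\lambda_0-\varepsilon)\oplus\cV(\lambda_0)$, hence $\chi_{SO(N)}(S^{\cW(\lambda_0+\varepsilon)})=\chi_{SO(N)}(S^{\cW(\lambda_0-\varepsilon)})\star\chi_{SO(N)}(S^{\cV(\lambda_0)})$, and then uses that $\chi_{SO(N)}(S^{\cW(\lambda_0-\varepsilon)})$ is \emph{invertible} in $U(SO(N))$ together with $\chi_{SO(N)}(S^{\cV(\lambda_0)})\neq\bI$. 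Your argument for (C3) is essentially correct because there $\cW(\pm\varepsilon)\subset\bH_1$ genuinely is a trivial representation, but the same shortcut does not transfer to (C2).

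Your (C1) argument has a related weakness: saying the two spheres ``acquire different isotropy stratifications'' is not a proof that they are $SO(N)$-inequivalent, since the nontrivial irreducible appearing in $\cV(\lambda_0)$ may already occur in $\cW(\lambda_0-\varepsilon)$. The paper again handles this uniformly with the invertibility-plus-$\chi_{SO(N)}(S^{\cV(\lambda_0)})\neq\bI$ argument (Theorem~\ref{thm:nontrivialityofEC}), which is the clean way to separate the indices in both (C1) and (C2).
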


\begin{proof}
Denote by $\tilde{\bH}\subset\bH$ the linear subspace normal to ${\G}(\tilde{u}_0)$ at $\tilde{u}_0$, i.e. $\tilde{\bH}=T_{\tilde{u}_0}^{\perp} {\G}(\tilde{u}_0)\subset \bH$.
We start the proof with showing that we can reduce comparing the Conley indices $\cC\cI_{{\G}}({\G}(\tilde{u}_0),-\nabla_u\Phi(\cdot, \lambda_0\pm\varepsilon))$ to comparing Euler characteristics of some indices on the space $\tilde{\bH}$.

For $n\geq 1$ put $\bH^n=\bigoplus_{k=1}^{n}\bH_k$  and $\Phi^n=\Phi_{|\bH^n\times\bR}\colon \bH^n\times\bR \rightarrow \bR.$
Note that $\cG(\tilde{u}_0)=\Gamma(\tilde{u}_0) \subset T_{\tilde{u}_0} {\Gamma}(\tilde{u}_0) \oplus T_{\tilde{u}_0}^{\perp}{\Gamma}(\tilde{u}_0) \approx \bR^m \approx \bH_1.$ Therefore $\cG(\tilde{u}_0)$ is a critical orbit of $\Phi^n(\cdot, \lambda_0 \pm \varepsilon)$ for $n \geq 1$. Note that, from the choice of $\varepsilon$ and the definition of $\Phi^n$, it is a non-degenerate one.

Since $\nabla_u \Phi(\cdot,\lambda)$ is a completely continuous perturbation of the identity for all $\lambda\in\bR$, from the definition of the infinite dimensional equivariant Conley index, see \cite{Izydorek}, the assertion of the theorem is equivalent to
$$CI_{\G} (\G(\tilde{u}_0), -\nabla_u\Phi^n(\cdot, \lambda_0-\varepsilon)) \neq CI_{\G} (\G(\tilde{u}_0), -\nabla_u\Phi^n(\cdot, \lambda_0+\varepsilon))$$ for $n$ sufficiently large.

It is known that the $\cG$-action on $\bH$ given by \eqref{eq:action} defines a $\cG_{\tilde{u}_0}$-action on $\tilde{\bH}$. Recall that $\cG_{\tilde{u}_0}=\{e\} \times SO(N).$ Hence $\tilde{\bH}$ is an orthogonal $SO(N)$-representation.

For $n\geq 1$ put $\tilde{\bH}^n=\bH^n \cap \tilde{\bH}=T_{\tilde{u}_0}^{\perp} \Gamma (\tilde{u}_0)\oplus \bigoplus_{k=2}^{n}\bH_k $ and define $\Psi^n_{\pm}=\Phi^n(\cdot, \lambda_0 \pm \varepsilon)_{|\tilde{\bH}^n}\colon \tilde{\bH}^n \rightarrow \bR.$ From this definition the functionals $\Psi^n_{\pm}$ are $SO(N)$-invariant.
Since $\cG(\tilde{u}_0)$ is a non-degenerate critical orbit of  $\Phi^n(\cdot, \lambda_0\pm \varepsilon)$, $\tilde{u}_0\in \tilde{\bH}$ is a non-degenerate critical point  of $\Psi^n_{\pm}$. Hence $\{\tilde{u}_0\}$ is an isolated invariant set (in the sense of the Conley index theory) of the flows generated by $-\nabla \Psi^n _{\pm}$.

Note that  since $\cG_{\tilde{u}_0}=\{e\} \times SO(N)$, by Lemma \ref{admissible}  the pair $({\G}, {\G}_{\tilde{u}_0})$ is admissible. Therefore, using Fact \ref{cor:3.2}  we obtain that the assertion reduces to
$$\chi_{\cG_{\tilde{u}_0}}(CI_{{\G_{\tilde{u}_0}}}(\{\tilde{u}_0\},-\nabla\Psi^n_-)) \neq \chi_{\cG_{\tilde{u}_0}}( CI_{{\G_{\tilde{u}_0}}}(\{\tilde{u}_0\},-\nabla\Psi^n_+))$$
for $n\in \mathbb{N}$ sufficiently large. It is easy to see that this inequality is equivalent to
$$\chi_{SO(N)}(CI_{SO(N)}(\{\tilde{u}_0\},-\nabla\Psi^n_-)) \neq \chi_{SO(N)}( CI_{SO(N)}(\{\tilde{u}_0\},-\nabla\Psi^n_+)).$$
We proceed to show that there exists $n_0 \in \mathbb{N}$ such that for $n \geq n_0$
\begin{equation}\label{eq:krok2}
CI_{SO(N)}(\{\tilde{u}_0\},-\nabla \Psi^n_{\pm})=CI_{SO(N)}(\{\tilde{u}_0\},-\nabla \Psi^{n_0}_{\pm}).
\end{equation}

Let ${\nu} \in \mathbb{N}.$ For $\delta>0$ sufficiently small and $\lambda \in [\lambda_0 -\varepsilon, \lambda_0+\varepsilon]$ we define $SO(N)$-equivariant gradient homotopy
 $H_{\lambda}^{\nu}\colon (D_{\delta}(\tilde{u}_0,\tilde{\bH}^{\nu})\times [0,1], \partial D_{\delta}(\tilde{u}_0,\tilde{\bH}^{\nu}) \times [0,1]) \rightarrow (\tilde{\bH}^{\nu},\tilde{\bH}^{\nu}\setminus \{0\})$ by
$$H_{\lambda}^{\nu}(u,t)= u -\tilde{u}_0 - L_{\lambda A}  (u -\tilde{u}_0)+t \lambda_0 P_{\nu} \circ \nabla \eta (u - \tilde{u}_0),$$
where $P_{\nu}\colon \tilde{\bH} \rightarrow \tilde{\bH}^{\nu}$ is the orthogonal $SO(N)$-equivariant projection onto $\tilde{\bH}^{\nu}.$
Note that from Lemma \ref{operatorLlambdaA} we have $P_{\nu} \circ L_{\lambda A} = L_{\lambda A} \circ P_{\nu}$ and hence this homotopy is well-defined.

Let us denote by
$\xi_{\lambda}^{\nu}\colon\tilde{\bH}^{\nu}\to\bR$ the $SO(N)$-invariant potential of $H^{\nu}_{\lambda}(\cdot,0).$ It is clear that $\nabla \xi_{\lambda}^{\nu}\colon  \tilde{\h}^{\nu} \rightarrow \tilde{\h}^{\nu}$ is self-adjoint $SO(N)$-equivariant linear map and is given by the formula $\nabla \xi_{\lambda}^{\nu}=(Id-L_{\lambda A})_{|\tilde{\h}^{\nu}}.$ From the homotopy invariance of the Conley index, see Theorem \ref{thm:homotopy}, we obtain
\begin{equation}\label{eq:krok2cont}
    CI_{SO(N)}(\{\tilde{u}_0\},-\nabla\Psi^{\nu}_{\pm})= CI_{SO(N)}(\{\tilde{u}_0\},-\nabla\xi^{{\nu}}_{\lambda_0 \pm \varepsilon}).
\end{equation}

Recall that $(\beta_k)$ denotes the sequence of the eigenvalues of the Neumann Laplacian and note that  $\beta_k\to +\infty$. Therefore,  there exists $n_0\in\bN$ such that the inequalities $\frac{\beta_n-(\lambda_0\pm\varepsilon)\alpha_j}{1+\beta_n}>0$ hold for every $n\geq n_0$ and $\alpha_j \in\sigma(A)$.
Hence, by Corollary \ref{spektrumLA}, there exists $n_0\in \bN$ such that $m^-(\nabla\xi_{\lambda_0 \pm \varepsilon}^n)=m^-(\nabla\xi_{\lambda_0 \pm \varepsilon}^{n_0})$  for every $n\geq n_0$, where $m^-(\cdot)$ is the Morse index.
Since $(\nabla\xi^{n}_{\lambda_0 \pm \varepsilon})_{|\tilde{\bH}^{n_0}}=\nabla\xi^{n_0}_{\lambda_0 \pm \varepsilon}$, the eigenspaces corresponding to the negative eigenvalues of $\nabla\xi^{n}_{\lambda_0 \pm \varepsilon}$ and $\nabla\xi^{n_0}_{\lambda_0 \pm \varepsilon}$ are the same $SO(N)$-representations. Thus, from Theorem \ref{CIjakosfera},
\begin{equation*}
        CI_{SO(N)}(\{\tilde{u}_0\},-\nabla\xi^{n}_{\lambda_0 \pm \varepsilon})=    CI_{SO(N)}(\{\tilde{u}_0\},-\nabla\xi^{n_0}_{\lambda_0 \pm \varepsilon}),
\end{equation*}
which implies \eqref{eq:krok2}.

What is left is to show that
\begin{equation*}
\chi_{SO(N)}\left(CI_{SO(N)}(\{\tilde{u}_0\}, -\nabla \Psi^{n_0}_+)\right) \neq \chi_{SO(N)}\left(CI_{SO(N)}(\{\tilde{u}_0\}, -\nabla \Psi^{n_0}_-)\right).
\end{equation*}

Denote by $\cW(\lambda)$ the direct sum of the eigenspaces of $Id-L_{\lambda A}$ (i.e.  of $\nabla \xi^{n_0}_{\lambda}$) corresponding to the negative eigenvalues and by $\cV(\lambda)$ the eigenspace corresponding to the zero eigenvalue. Note that from Corollary \ref{spektrumLA},

$$\cW(\lambda)=
\Bigg(\bigoplus_{\alpha_j\in\sigma( A)} \ \bigoplus_{\substack{\beta_k \in \sigma(-\Delta;B^N)\\\beta_k<\lambda  \alpha_j}} \bV_{-\Delta}(\beta_k)^{\mu_{ A}(\alpha_{j})}\Bigg)\cap \tilde{\bH},$$
$$\cV(\lambda)=\Bigg(\bigoplus_{\alpha_j\in\sigma(A)} \ \bigoplus_{\substack{\beta_k \in \sigma(-\Delta;B^N)\\ \beta_k=\lambda  \alpha_j}} \bV_{-\Delta}(\beta_k)^{\mu_{ A}(\alpha_{j})}\Bigg)\cap \tilde{\bH}.$$

From Theorem \ref{CIjakosfera}, $CI_{SO(N)}(\{\tilde{u}_0\},-\nabla\xi^{n_0}_{\lambda_0 \pm \varepsilon})$ are $SO(N)$-homotopy types of  $S^{\cW(\lambda_0\pm\varepsilon)}.$
Hence, from \eqref{eq:krok2cont},
 \[ \chi_{SO(N)}\left(CI_{SO(N)}(\{\tilde{u}_0\},-\nabla\Psi^{n_0}_{\pm})\right)=
\chi_{SO(N)}\left(S^{\cW(\lambda_0\pm\varepsilon)}\right).\]

 \begin{enumerate}
\item Suppose that $\lambda_0>0$  and $\varepsilon$ is such that $\lambda_0 - \varepsilon>0$. Recall that $\beta_k \geq 0$ for all $\beta_k \in \sigma(-\Delta;B^N).$ Then
$\cW(\lambda_0+\varepsilon)=\cW(\lambda_0-\varepsilon)\oplus \cV(\lambda_0).$
If  the assumption (C1) is satisfied, then, by Theorem \ref{thm:nontrivialityofEC} and Remark  \ref{rem:nontriviality}, we obtain
$\chi_{SO(N)}(S^{\cV(\lambda_0)})\neq \bI\in U(SO(N)).$
Similarly, if (C2) is fulfilled, then $\cV(\lambda_0)$ is a trivial $SO(N)$-representation and, from Corollary \ref{spektrumLA} and the definition of $\tilde{\h}$, $\dim \cV(\lambda_0)=\dim \ker (Id-L_{\lambda_0A}) - \dim \ker A$ is odd. Therefore:
\begin{eqnarray*}
\chi_{SO(N)}(S^{\cV(\lambda_0)})
=(-1)^{\dim\cV(\lambda_0)}\chi_{SO(N)}\left(SO(N)/SO(N)^+\right)=-\bI.
\end{eqnarray*}
In both cases we have
\begin{eqnarray*}
\chi_{SO(N)}\left(CI_{SO(N)}(\{\tilde{u}_0\},-\nabla\Psi^{n_0}_{+})\right)= \chi_{SO(N)}(S^{\cW(\lambda_0-\varepsilon)\oplus \cV(\lambda_0)})=\\
= \chi_{SO(N)}(S^{\cW(\lambda_0-\varepsilon)})\star \chi_{SO(N)}(S^{\cV(\lambda_0)}) \neq \chi_{SO(N)}(S^{\cW(\lambda_0-\varepsilon)})=\\ = \chi_{SO(N)}\left(CI_{SO(N)}(\{\tilde{u}_0\},-\nabla\Psi^{n_0}_{-})\right).
\end{eqnarray*}
In the second equality we use the fact that $S^{\cW(\lambda_0-\varepsilon)\oplus \cV(\lambda_0)}$ is $SO(N)$-homeomorphic to $S^{\cW(\lambda_0-\varepsilon)}\wedge S^{\cV(\lambda_0)}$ and the formula for multiplication in $U(SO(N))$, see \eqref{eq:actionsUG}. Then we use invertibility of $\chi_{SO(N)}(S^{\cW(\lambda_0-\varepsilon)})$ in $U(SO(N))$,  see \cite{GolRyb1}.

\item Suppose that $\lambda_0<0$. Then
$\cW(\lambda_0-\varepsilon)=\cW(\lambda_0+\varepsilon)\oplus \cV(\lambda_0)
$
and hence
\begin{eqnarray*}
\chi_{SO(N)}\left(CI_{SO(N)}(\{\tilde{u}_0\},-\nabla\Psi^{n_0}_{+})\right)\neq \chi_{SO(N)}\left(CI_{SO(N)}(\{\tilde{u}_0\},-\nabla\Psi^{n_0}_{-})\right),
\end{eqnarray*}
as before.
\item Finally, suppose that $\lambda_0=0$. Then, since
\[
\cW(\pm\varepsilon)=
\bigoplus\limits_{\alpha_j\in\sigma_{\pm}(A)}\bV_{-\Delta}(0)^{\mu_{A}(\alpha_j)},
\]
and therefore $\cW(\pm\varepsilon)$ are trivial $SO(N)$-representations,
\begin{eqnarray*}
&&\chi_{SO(N)}\left(CI_{SO(N)}(\{\tilde{u}_0\},-\nabla\Psi^{n_0}_{\pm})\right)=\chi_{SO(N)}(S^{\cW(\pm\varepsilon)})=\\
&=&(-1)^{\dim\cW(\pm\varepsilon)}\cdot \chi_{SO(N)}\left(SO(N)/SO(N)^+\right)=(-1)^{\dim\cW(\pm\varepsilon)} \cdot\bI.
\end{eqnarray*}
Hence, because the assumption (C3) implies that  $\dim\cW(\varepsilon)-\dim\cW(-\varepsilon)$ is odd, we have
\[
\chi_{SO(N)}\left(CI_{SO(N)}(\{\tilde{u}_0\},-\nabla\Psi^{n_0}_{+})\right)\neq
\chi_{SO(N)}\left(CI_{SO(N)}(\{\tilde{u}_0\},-\nabla\Psi^{n_0}_{-})\right),
\]
which completes the proof.

\end{enumerate}

\end{proof}

Now we are in a position to prove one of the main results of our paper, namely the bifurcation theorems.

\begin{Theorem}\label{th:BIF}
Consider the system \eqref{eq:neumann} with the potential $F$ and $u_0 \in \nabla F^{-1}(0)$ satisfying assumptions (B1)--(B4). Assume that $\lambda_0 \in \Lambda $ and one of the conditions (C1)--(C3) is satisfied. Then a local bifurcation of solutions of \eqref{eq:neumann} occurs from the orbit ${\G}(\tilde{u}_0) \times \{\lambda_0\}$.
\end{Theorem}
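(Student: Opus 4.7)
The plan is to derive Theorem \ref{th:BIF} as a contrapositive consequence of Theorem \ref{th:zmiana_indeksu} via the continuation property of the equivariant Conley index. Suppose, for contradiction, that $\lambda_0 \in \Lambda$ satisfies one of (C1)--(C3) but no local bifurcation from ${\G}(\tilde{u}_0) \times \{\lambda_0\}$ occurs. Then $(\tilde{u}_0,\lambda_0)$ is not an accumulation point of $\cN$, so there exist $\delta_0>0$ and $\varepsilon'\in(0,\varepsilon]$ such that the only zeros of $\nabla_u \Phi(\cdot,\lambda)$ in a $\cG$-invariant $\delta_0$-tubular neighborhood of $\G(\tilde{u}_0)$ for $\lambda\in(\lambda_0-\varepsilon',\lambda_0+\varepsilon')$ are the points of $\G(\tilde{u}_0)$ itself; $\cG$-invariance of the tubular neighborhood comes from acting by $\cG$ on the ball $D_{\delta_0}(\tilde{u}_0,\bH)$ produced by the contradiction hypothesis.

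Fix the $\varepsilon>0$ chosen before Theorem \ref{th:zmiana_indeksu}, so that $\Lambda\cap[\lambda_0-\varepsilon,\lambda_0+\varepsilon]=\{\lambda_0\}$. For every $\lambda\in[\lambda_0-\varepsilon,\lambda_0+\varepsilon]\setminus\{\lambda_0\}$, Lemma \ref{fact:warunekkonieczny} together with inequality \eqref{eq:warunek_dostateczny} forces $\dim\ker\nabla_u^2\Phi(\tilde{u}_0,\lambda)=\dim\G(\tilde{u}_0)$, so the equivariant implicit function theorem (Theorem \ref{G-ImplicitInfinite}) yields open neighborhoods $U_\lambda\ni\lambda$ and $\cG$-invariant tubular neighborhoods $\cV_\lambda\supset\G(\tilde{u}_0)$ such that $\nabla_u\Phi(\cdot,\mu)^{-1}(0)\cap\cV_\lambda=\G(\tilde{u}_0)$ for every $\mu\in U_\lambda$. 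By compactness of $[\lambda_0-\varepsilon,\lambda_0+\varepsilon]$, finitely many of these neighborhoods together with $(\lambda_0-\varepsilon',\lambda_0+\varepsilon')$ cover the whole interval; shrinking the tubes to a common $\cG$-invariant isolating neighborhood $\cU$ of $\G(\tilde{u}_0)$ in $\bH$, we obtain that $\G(\tilde{u}_0)$ is an isolated invariant set of the flow generated by $-\nabla_u\Phi(\cdot,\lambda)$ inside $\cU$, uniformly in $\lambda\in[\lambda_0-\varepsilon,\lambda_0+\varepsilon]$.

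The homotopy invariance of the infinite-dimensional equivariant Conley index (see \cite{Izydorek}) applied to the admissible one-parameter family $\lambda\mapsto-\nabla_u\Phi(\cdot,\lambda)$ with the common isolating neighborhood $\cU$ then gives
\[
\mathcal{CI}_{\cG}(\G(\tilde{u}_0),-\nabla_u\Phi(\cdot,\lambda_0-\varepsilon))=\mathcal{CI}_{\cG}(\G(\tilde{u}_0),-\nabla_u\Phi(\cdot,\lambda_0+\varepsilon)),
\]
which directly contradicts Theorem \ref{th:zmiana_indeksu}. Hence the contradiction hypothesis fails and a local bifurcation must occur from $\G(\tilde{u}_0)\times\{\lambda_0\}$.

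The main delicate point is the uniform construction of the isolating neighborhood $\cU$: away from $\lambda_0$ it is produced pointwise by the equivariant implicit function theorem and patched by compactness, whereas at $\lambda_0$ it is supplied by the non-bifurcation assumption; some care is needed to arrange that all these $\cG$-invariant local neighborhoods shrink to a single tube around $\G(\tilde{u}_0)$ to which the continuation property of $\mathcal{CI}_{\cG}$ applies. Once this is in place the argument is purely formal, since every topological ingredient has already been supplied by Theorem \ref{th:zmiana_indeksu}.
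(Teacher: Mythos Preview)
Your proposal is correct and follows essentially the same approach as the paper: invoke Theorem \ref{th:zmiana_indeksu} for the change of the equivariant Conley index, then argue by contradiction via the continuation (homotopy invariance) property that the absence of local bifurcation would force the indices at $\lambda_0\pm\varepsilon$ to coincide. The paper states this step tersely, referring to the argument in Theorem 2.1 of \cite{SmoWass}, whereas you spell out the construction of the uniform isolating neighborhood by patching together the tubes from the equivariant implicit function theorem (away from $\lambda_0$) with the tube provided by the non-bifurcation hypothesis (at $\lambda_0$); this is exactly the content of the cited argument.
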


\begin{proof}

From Theorem \ref{th:zmiana_indeksu} it follows that if one of the conditions (C1)--(C3) is satisfied  then
$\cC\cI_{{\G}}({\G}(\tilde{u}_0),-\nabla_u\Phi(\cdot, \lambda_{0}-\varepsilon)) \neq \cC\cI_{{\G}}({\G}(\tilde{u}_0),-\nabla_u\Phi(\cdot, \lambda_{0}+\varepsilon)),$ for sufficiently small $\varepsilon >0.$
Following for example the idea of the proof of Theorem 2.1 of \cite{SmoWass}, using the continuation property of the Conley index, one can prove that the change of the Conley index implies a local bifurcation of critical orbits.
\end{proof}

It is known that, in general, the change of the Conley index along the family of trivial solutions, does not imply the global bifurcation. However, using the relation between the Conley index and the degree for strongly indefinite functionals, under some assumptions one can prove the existence of connected sets of bifurcating solutions. It occurs that (C1)--(C3) are this kind of assumptions.

\begin{Theorem}\label{th:GLOB}
Consider the system \eqref{eq:neumann} with the potential $F$ and $u_0 \in \nabla F^{-1}(0)$ satisfying assumptions (B1)--(B4). Assume that $\lambda_0 \in \Lambda $ and one of the conditions (C1)--(C3) is satisfied. Then a global bifurcation of solutions of \eqref{eq:neumann} occurs from the orbit ${\G}(\tilde{u}_0) \times \{\lambda_0\}$.
\end{Theorem}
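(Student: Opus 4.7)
The plan is to upgrade the local bifurcation statement of Theorem~\ref{th:BIF} to a global one by replacing the Conley-index obstruction with a degree-theoretic obstruction, and then running a Rabinowitz-type continuation argument. Since assumption (B1) guarantees that $\nabla_u\Phi(\cdot,\lambda)$ is, for every $\lambda\in\bR$, a $\cG$-equivariant completely continuous perturbation of the identity, the degree $\degg$ for $\cG$-invariant strongly indefinite functionals introduced in \cite{GolRyb1} will be available on any $\cG$-invariant, bounded, admissible neighbourhood.

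First I would fix $\varepsilon>0$ so small that $[\lambda_0-\varepsilon,\lambda_0+\varepsilon]\cap\Lambda=\{\lambda_0\}$ and choose a $\cG$-invariant open set $U\subset\bH$ isolating $\cG(\tilde u_0)$ as the only critical orbit of $\Phi(\cdot,\lambda_0\pm\varepsilon)$; the existence of such a $U$ follows from Lemma~\ref{fact:warunekkonieczny} combined with the non-degeneracy of the orbit. Next I would transfer Theorem~\ref{th:zmiana_indeksu} from the Conley index to the degree: under (C1)--(C3) the $\cG$-equivariant Conley indices at $\lambda_0\pm\varepsilon$ have distinct equivariant Euler characteristics, and via the standard identification---up to an invertible factor in $U(\cG)$---between $\degg(\nabla_u\Phi(\cdot,\lambda),U)$ and $\chi_{\cG}\bigl(\mathcal{CI}_{\cG}(\cG(\tilde u_0),-\nabla_u\Phi(\cdot,\lambda))\bigr)$ proved in \cite{GolRyb1}, this will force
\[
\degg\bigl(\nabla_u\Phi(\cdot,\lambda_0-\varepsilon),U\bigr)\;\neq\;\degg\bigl(\nabla_u\Phi(\cdot,\lambda_0+\varepsilon),U\bigr).
\]

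The third step will be the Rabinowitz alternative. Denote by $\cC(\lambda_0)$ the connected component of $cl(\cN)\cup(\cG(\tilde u_0)\times\{\lambda_0\})$ through $\cG(\tilde u_0)\times\{\lambda_0\}$, and argue by contradiction. Assuming that $\cC(\lambda_0)$ is bounded and disjoint from $\cT\setminus(\cG(\tilde u_0)\times\{\lambda_0\})$, a Whyburn-type separation lemma applied to $cl(\cN)$ produces a bounded $\cG$-invariant open set $\Omega\subset\bH\times\bR$ with $\cC(\lambda_0)\subset\Omega$, $\partial\Omega\cap cl(\cN)=\emptyset$, and $\Omega\cap\cT\subset U\times\{\lambda_0\}$. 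For each $\lambda\in[\lambda_0-\varepsilon,\lambda_0+\varepsilon]$ the slice $\Omega_\lambda=\{u\in\bH\colon(u,\lambda)\in\Omega\}$ is an admissible neighbourhood for $\nabla_u\Phi(\cdot,\lambda)$; by homotopy invariance the map $\lambda\mapsto\degg(\nabla_u\Phi(\cdot,\lambda),\Omega_\lambda)$ is constant on this interval, and by excision it agrees with $\degg(\nabla_u\Phi(\cdot,\lambda_0\pm\varepsilon),U)$ at the endpoints. This will contradict the inequality obtained in the previous step.

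The hard part will be the execution of the Rabinowitz alternative in the present equivariant, orbit-valued setting: I need to verify that the separating neighbourhood $\Omega$ can be chosen $\cG$-invariant, that the slicing $\Omega_\lambda$ is compatible with the homotopy, additivity and excision properties of $\degg$, and that no spurious contribution to the degree arises from critical orbits approaching $\partial U$. Once these technicalities are handled, the global bifurcation follows from the Euler-characteristic computation carried out in the proof of Theorem~\ref{th:zmiana_indeksu}.
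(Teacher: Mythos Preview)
Your plan is essentially the paper's own approach: convert the Conley-index change of Theorem~\ref{th:zmiana_indeksu} into a jump of the equivariant degree and then invoke an equivariant Rabinowitz alternative. Two minor remarks. First, Theorem~\ref{th:zmiana_indeksu} as stated only gives inequality of the Conley indices, not of their Euler characteristics; the paper extracts the latter by pairing the computation inside that proof with Fact~\ref{cor:3.2}, so you should make that link explicit rather than asserting the Euler-characteristic jump directly. Second, the paper does not re-execute the Whyburn separation and slicing argument you outline in step three: it simply cites the ready-made equivariant Rabinowitz alternative (Theorem~3.3 of \cite{GolRyb1}) together with G\k{e}ba's degree/Conley-index identity, which already handles the $\cG$-invariance, excision, and homotopy issues you flag as ``the hard part''. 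Your detour is sound but unnecessary here.
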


\begin{proof}

Let $\cU \subset \bH$ be an open, bounded and ${\G}$-invariant subset such that $\nabla_u \Phi (\cdot, \lambda_0\pm\varepsilon)^{-1}(0)\cap \cU={\G}(\tilde{u}_0).$ Denote by $\nabla_{\G}\textrm{-}\mathrm{deg}(\cdot, \cdot)$ the degree for equivariant gradient maps of the form completely continuous perturbation of the identity, defined in \cite{Ryb2005}.
From the definition of this degree, for $n_0$ sufficiently large,
\begin{eqnarray*}
\nabla_{{\G}}\textrm{-}\mathrm{deg}(\nabla_u\Phi(\cdot, \lambda_0\pm\varepsilon),\cU)
&=& \nabla_{{\G}}\textrm{-}\mathrm{deg}(\nabla_u\Phi^{n_0}(\cdot, \lambda_0\pm\varepsilon),\cU\cap \bH^{n_0})=\\&=&\chi_{{\G}}\left(CI_{{\G}}({\G}(\tilde{u}_0),-\nabla_u\Phi^{n_0}(\cdot, \lambda_0\pm\varepsilon))\right),
\end{eqnarray*}
where $\Phi^{n_0}$ is defined as in the proof of Theorem \ref{th:zmiana_indeksu}.
The latter equality is the relation between the Conley index and the degree proved by Gęba in \cite{Geba}, see also Corollary 1 in \cite{GolRyb}.

From Theorem \ref{th:zmiana_indeksu} and Fact \ref{cor:3.2} we have
$$\chi_{\G} (CI_{{\G}}({\G}(\tilde{u}_0),-\nabla_u\Phi^{n_0}(\cdot, \lambda_0-\varepsilon))) \neq \chi_{\G}(CI_{{\G}}({\G}(\tilde{u}_0),-\nabla_u\Phi^{n_0}(\cdot, \lambda_0+\varepsilon))).$$ Therefore $$\nabla_{{\G}}\textrm{-}\mathrm{deg}(\nabla_u\Phi(\cdot, \lambda_0-\varepsilon),\cU)\neq \nabla_{{\G}}\textrm{-}\mathrm{deg}(\nabla_u\Phi(\cdot, \lambda_0+\varepsilon),\cU).$$
From the equivariant version of the Rabinowitz alternative, see for example Theorem 3.3 of \cite{GolRyb1}, the change of the degree for ${\G}$-equivariant gradient maps implies a global bifurcation, so we obtain the assertion.
\end{proof}

In Theorem \ref{th:GLOB} we have proved that if the assumption (C3) is satisfied, then $0\in GLOB$. On the other hand, repeating the reasoning from the proof of this theorem it is easy to show that if  the number $\sum_{\alpha_j \in \sigma_+(A)} \mu_A(\alpha_j)-\sum_{\alpha_j \in \sigma_-(A)} \mu_A(\alpha_j)$ is even, then the Euler characteristics
$\chi_{\cG}\left(CI_{\cG}(\cG(\tilde{u}_0),-\nabla_u\Phi^{n_0}(\cdot,\varepsilon)\right)$ and $\chi_{\cG}\left(CI_{\cG}(\cG(\tilde{u}_0),-\nabla_u\Phi^{n_0}(\cdot,-\varepsilon))\right)$ are equal. Therefore, we do not know whether $0 \in GLOB$. However, under the assumption weaker than (C3)  we can prove the result concerning the local bifurcation.

\begin{Theorem}\label{thm:0} Consider the system \eqref{eq:neumann} with the potential $F$ and $u_0 \in \nabla F^{-1}(0)$ satisfying assumptions (B1)--(B4).  Assume that $\lambda_0 =0$ and $\sum_{\alpha_j \in \sigma_+(A)} \mu_A(\alpha_j)\neq\sum_{\alpha_j \in \sigma_-(A)} \mu_A(\alpha_j)$. Then a local bifurcation of solutions of \eqref{eq:neumann} occurs from the orbit ${\G}(\tilde{u}_0) \times \{0\}$.
\end{Theorem}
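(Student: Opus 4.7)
The plan is to adapt the reasoning of Theorem \ref{th:zmiana_indeksu}, case $\lambda_0 = 0$, but to detect the change of the Conley index at the level of pointed $SO(N)$-homotopy types rather than of $SO(N)$-equivariant Euler characteristics. The hypothesis $\sum_{\alpha_j \in \sigma_+(A)} \mu_A(\alpha_j) \neq \sum_{\alpha_j \in \sigma_-(A)} \mu_A(\alpha_j)$ forces $\sigma(A)\setminus\{0\} \neq \emptyset$, and since $\beta_1 = 0 \in \sigma(-\Delta; B^N)$ we have $0 \in \Lambda$. I would choose $\varepsilon > 0$ with $\Lambda \cap [-\varepsilon, \varepsilon] = \{0\}$; by Lemma \ref{fact:warunekkonieczny}, $\G(\tilde{u}_0)$ is then an isolated critical orbit of each of $\Phi(\cdot, \pm\varepsilon)$, so the infinite-dimensional Conley indices $\mathcal{CI}_\G(\G(\tilde{u}_0), -\nabla_u \Phi(\cdot, \pm\varepsilon))$ are well-defined.

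Next I would carry over verbatim the reduction steps from the proof of Theorem \ref{th:zmiana_indeksu}: pass to the Galerkin approximations $\Phi^n$, restrict to the normal slice $\tilde{\bH}^n$ using admissibility of $(\G, \G_{\tilde{u}_0})$ guaranteed by Lemma \ref{admissible}, linearize via the straight-line gradient homotopy $H_\lambda^\nu$, and use stabilization of the Morse index to obtain an $n_0 \in \bN$ for which it suffices to show
\[
CI_{SO(N)}(\{\tilde{u}_0\}, -\nabla \Psi^{n_0}_+) \;\neq\; CI_{SO(N)}(\{\tilde{u}_0\}, -\nabla \Psi^{n_0}_-).
\]
Up to this point nothing specific to condition (C3) has been used. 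For $\lambda_0 = 0$, exactly as in case 3 of Theorem \ref{th:zmiana_indeksu}, Theorem \ref{CIjakosfera} identifies each side with the pointed $SO(N)$-homotopy type of $S^{\cW(\pm\varepsilon)}$, where
\[
\cW(\pm\varepsilon) = \bigoplus_{\alpha_j \in \sigma_\pm(A)} \bV_{-\Delta}(0)^{\mu_A(\alpha_j)};
\]
the intersection with $\tilde{\bH}$ is automatic, since $T_{\tilde{u}_0}\G(\tilde{u}_0)$ lies in the space of constants corresponding to $\ker A$, which does not contribute to the above sum.

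Because $\bV_{-\Delta}(0)$ consists of constant functions, $SO(N)$ acts trivially on $\cW(\pm\varepsilon)$, so $S^{\cW(\pm\varepsilon)}$ is a genuine sphere of dimension $d_\pm := \sum_{\alpha_j \in \sigma_\pm(A)} \mu_A(\alpha_j)$ equipped with the trivial $SO(N)$-action. By assumption $d_+ \neq d_-$, and hence the two pointed $SO(N)$-CW-complexes have distinct non-equivariant—and therefore distinct equivariant—homotopy types. This gives the required inequality of Conley indices, and the continuation property then yields a local bifurcation of critical orbits of $\Phi(\cdot, \lambda)$ from $\G(\tilde{u}_0) \times \{0\}$ by the same argument (following Theorem 2.1 of \cite{SmoWass}) used to derive Theorem \ref{th:BIF} from Theorem \ref{th:zmiana_indeksu}.

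The subtle point, and the reason this statement must be separated from Theorem \ref{th:BIF}, is exactly that when $d_+$ and $d_-$ have the same parity one has $\chi_{SO(N)}(S^{\cW(\pm\varepsilon)}) = (-1)^{d_\pm}\bI$ coinciding in $U(SO(N))$, so the degree-based argument of Theorem \ref{th:GLOB} collapses; one genuinely has to compare the Conley indices as pointed $SO(N)$-homotopy types rather than only their equivariant Euler characteristics, which is where the dimensional (not merely parity) information from the hypothesis is used.
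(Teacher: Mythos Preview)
Your overall strategy---distinguishing the Conley indices as pointed homotopy types rather than via their equivariant Euler characteristics---is exactly right and matches the paper. However, there is a genuine gap in the step where you ``restrict to the normal slice $\tilde{\bH}^n$ using admissibility'' and claim it suffices to show
\[
CI_{SO(N)}(\{\tilde{u}_0\}, -\nabla \Psi^{n_0}_+)\neq CI_{SO(N)}(\{\tilde{u}_0\}, -\nabla \Psi^{n_0}_-).
\]
Admissibility, as packaged in Fact~\ref{cor:3.2}, only lifts an inequality of \emph{Euler characteristics} $\chi_{\cG_{\tilde{u}_0}}(\cdot)$ on the slice to an inequality of the $\cG$-Conley indices on the orbit. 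It does not, as stated, lift an inequality of $\cG_{\tilde{u}_0}$-homotopy types. Since in the present situation the slice Euler characteristics may coincide (precisely when $d_+\equiv d_- \pmod 2$), you cannot invoke Fact~\ref{cor:3.2} and you have not justified why a difference at the slice level propagates to the orbit level.

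The paper closes this gap by going back to the source of Fact~\ref{cor:3.2}, namely Theorem~3.1 of \cite{PRS}, which gives an explicit formula: the $\cG$-Conley index of the orbit is the $\cG$-homotopy type of
\[
X_{\pm}=\bigl(\cG(\tilde{u}_0)\times S^{d_{\pm}}\bigr)\big/\bigl(\cG(\tilde{u}_0)\times\{*\}\bigr).
\]
One then distinguishes $X_+$ from $X_-$ by passing to $\cG$-orbit spaces $X_{\pm}/\cG$, which have the (ordinary) homotopy type of $S^{d_{\pm}}$; since $d_+\neq d_-$ these differ. Your argument becomes correct once you insert this computation in place of the bare appeal to admissibility.
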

\begin{proof}
Using the notation of the proof of Theorem \ref{th:zmiana_indeksu}, we observe that $\cW(\pm\varepsilon)$ are trivial $SO(N)$-representations. Therefore $CI_{SO(N)}(\{\tilde{u}_0\},-\nabla\Psi^{n_0}_{\pm})$ are $SO(N)$-homotopy types of $S^{\dim \cW(\pm\varepsilon)}$.
Using information from \cite{PRS} (namely Theorem 3.1 and the equality (2.11)) and from \cite{Kawakubo} (Lemma 1.88) we obtain that $CI_{\cG}(\cG(\tilde{u}_0),-\nabla_u\Phi^{n_0}(\cdot,\pm \varepsilon))$ are $\cG$-homotopy types of
\[\left(\cG/\cG_{\tilde{u}_0}\times S^{\dim \cW(\pm\varepsilon)}\right)/\left(\cG/\cG_{\tilde{u}_0}\times \{*\}\right).\]
From Proposition 1.53 of \cite{Kawakubo}, we obtain that the above is $\cG$-homotopy equivalent to
\[X_{\pm}=\left(\cG(\tilde{u}_0)\times S^{\dim \cW(\pm\varepsilon)}\right)/\left(\cG(\tilde{u}_0)\times \{*\}\right).\]
But $X_+$ and $X_-$ are different $\cG$-homotopy types. Indeed, if $X_+$ and $X_-$ are the same $\cG$-homotopy types, then the orbit spaces $X_{+}/\cG$ and $X_{-}/\cG$ are the same homotopy types. This is impossible, since the spaces $X_{\pm}/\cG$ are homotopy types of $S^{\dim \cW(\pm\varepsilon)}$, see \cite{TomDieck}.
Analysis similar to that  in the proof of Theorem \ref{th:BIF} shows the assertion.
\end{proof}


\subsection{Symmetry breaking}\label{subsec:symmbreak}

In this section we consider the symmetry-breaking problem, i.e. the change of the isotropy groups of solutions of \eqref{eq:neumann} along connected sets. More precisely, we characterise bifurcation orbits of the equation \eqref{eq:neumann} at which the global symmetry-breaking phenomenon
occurs. Here and thereafter we use the notation of Section \ref{bifurcations}.
Recall that $\cT$ denotes the set of trivial solutions.

\begin{Definition}\label{def:symmbreak}
We say that a global symmetry-breaking phenomenon occurs at the orbit $\cG(\tilde{u}_0)\times\{\lambda_{0}\}$
if $\lambda_0\in GLOB$ and
 there exists $U\subset \bH\times\bR$ such that $\cG(\tilde{u}_0)\times\{\lambda_{0}\}\subset U$ and $\cG_{(u,\lambda)}\neq \cG_{(\tilde{u}_0, \lambda_0)}$ for all $(u,\lambda)\in (U\cap (\nabla_u\Phi)^{-1}(0))\setminus \cT$.
\end{Definition}

Note that since the group $\cG$ acts trivially on the set of parameters $\lambda$, the condition $\cG_{(u,\lambda)}\neq \cG_{(\tilde{u}_0, \lambda_0)}$ is equivalent to $\cG_u \neq \cG_{\tilde{u}_0}$.
In particular we are interested in studying $SO(N)$-symmetries of solutions. We say that the function $u$ satisfying $SO(N)_u=SO(N)$ is radially symmetric.

Our aim in this section is to prove the following characterisation of global symmetry-breaking phenomenon of solutions of \eqref{eq:neumann}:

\begin{Theorem}\label{thm:SymmBreak}
Consider the system \eqref{eq:neumann} with the potential $F$ and $u_0 \in \nabla F^{-1}(0)$ satisfying assumptions (B1)--(B4). Fix $\lambda_0\in\Lambda$ and suppose that $\sigma(\lambda_0 A)\cap \sigma(-\Delta; B^N)\setminus\{0\}=\{\alpha_{j_1},\ldots, \alpha_{j_s}\}$ and
 $\bV_{-\Delta}(\alpha_{j_i})^{SO(N)}=\{0\}$ for every $i=1,\ldots,s$.
 Then the global symmetry-breaking phenomenon occurs at the orbit $\cG(\tilde{u}_0)\times\{\lambda_{0}\}$.

\end{Theorem}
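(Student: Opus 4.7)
The plan is to combine Theorem \ref{th:GLOB}, which will deliver the global bifurcation at $\lambda_0$, with the generalised Dancer Lemma \ref{lem:IsGr}, which controls which isotropy types can appear among solutions bifurcating from the orbit. The common driving mechanism is the hypothesis $\bV_{-\Delta}(\alpha_{j_i})^{SO(N)} = \{0\}$.

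First I would verify condition (C1) of Theorem \ref{th:GLOB}. Since the set $\{\alpha_{j_1},\ldots,\alpha_{j_s}\}$ is nonempty and each $\alpha_{j_i}$ is a nonzero element of $\sigma(\lambda_0 A)$, one gets $\lambda_0 \neq 0$. The connected group $SO(N)$ has no nontrivial one-dimensional real representation, hence $\bV_{-\Delta}(\alpha_{j_i})^{SO(N)} = \{0\}$ forces $\dim \bV_{-\Delta}(\alpha_{j_i}) > 1$ for each $i$. Therefore (C1) (equivalently (C1')) holds, and Theorem \ref{th:GLOB} yields $\lambda_0 \in GLOB$.

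Next I would deduce the local symmetry-breaking clause of Definition \ref{def:symmbreak} by contradiction. Suppose no admissible neighbourhood $U$ exists; then there is a sequence of nontrivial solutions $(u_n,\lambda_n) \in (\nabla_u\Phi)^{-1}(0)\setminus \cT$ converging to a point of $\cG(\tilde{u}_0)\times\{\lambda_0\}$ with $\cG_{u_n} = \cG_{\tilde{u}_0} = \{e\}\times SO(N)$. The generalised Dancer Lemma \ref{lem:IsGr}, which extends the classical result to non-discrete groups acting on critical orbits, then forces $\cG_{\tilde{u}_0}$ to fix a nonzero element of the kernel of the linearization transversal to $T_{\tilde{u}_0}\cG(\tilde{u}_0)$. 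By Corollary \ref{cor:kernel} together with the non-degeneracy identification $\ker A = T_{u_0}\Gamma(u_0)$ (which absorbs the $\alpha_j=0$ contribution into the tangent to the orbit), this transversal kernel equals
\[
V = \bigoplus_{i=1}^{s} \bV_{-\Delta}(\alpha_{j_i})^{\mu_{\lambda_0 A}(\alpha_{j_i})}.
\]
The hypothesis yields $V^{SO(N)} = \{0\}$, which is the desired contradiction. Consequently a neighbourhood $U$ as required by Definition \ref{def:symmbreak} exists.

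The main obstacle I anticipate is invoking Lemma \ref{lem:IsGr} in precisely the right form: for a continuous Lie group acting on $\bH$, the Dancer fixed-vector criterion must be stated transversally to the tangent of the critical orbit, and that tangent must be correctly identified via the non-degeneracy hypothesis \eqref{eq:orbita}. The normality of $SO(N)$ inside $\cG=\Gamma\times SO(N)$ is a welcome simplification here: every conjugate of $\cG_{\tilde{u}_0}=\{e\}\times SO(N)$ inside $\cG$ coincides with $\cG_{\tilde{u}_0}$, so the slice-theoretic fact that $\cG_u$ is contained in a conjugate of $\cG_{\tilde{u}_0}$ for $u$ close to the orbit reduces to $\cG_u \subseteq \cG_{\tilde{u}_0}$, and the inequality $\cG_u \neq \cG_{\tilde{u}_0}$ required by Definition \ref{def:symmbreak} then amounts to a strict containment, which is exactly what the above contradiction delivers.
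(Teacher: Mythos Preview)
Your proposal is correct and follows essentially the same route as the paper: establish $\lambda_0\in GLOB$ via Theorem~\ref{th:GLOB}, identify the kernel of the linearisation orthogonal to $\bH_1$ (equivalently, transversal to the orbit) as $V=\bigoplus_i \bV_{-\Delta}(\alpha_{j_i})^{\mu_{\lambda_0 A}(\alpha_{j_i})}$ via Corollary~\ref{cor:kernel} and \eqref{eq:orbita}, and then use Lemma~\ref{lem:IsGr} together with $V^{SO(N)}=\{0\}$ to rule out $\cG_u=\{e\}\times SO(N)$ for nearby nontrivial solutions. Two minor remarks: your explicit verification of (C1) actually makes precise a step the paper leaves implicit, while your contradiction-with-sequence detour is unnecessary, since Lemma~\ref{lem:IsGr} already hands you the neighbourhood $U$ directly and the paper simply takes that $U$ and argues pointwise.
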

Note that the assumption  $\bV_{-\Delta}(\alpha_{j_i})^{SO(N)}=\{0\}$ means that there is no radially symmetric eigenfunction associated with $\alpha_{j_i}$.

To prove this theorem we first verify the following lemma:

\begin{Lemma}\label{lem:IsGr}
Fix $\lambda_0 \in \Lambda$.
 Then there exists $U\subset \bH\times\bR$ such that $\cG(\tilde{u}_0)\times\{\lambda_{0}\}\subset U$ and for all $(u,\lambda)\in (U\cap (\nabla_u\Phi)^{-1}(0))\setminus \cT$ there exists $\overline{u}\in\ker \nabla_u^2\Phi_{|\bH_1^{\perp}}(\tilde{u}_0,\lambda_0)\setminus \{0\}$ such that $\G_u\subset \G_{\overline{u}}$.
\end{Lemma}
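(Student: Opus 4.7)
The strategy is an equivariant Lyapunov--Schmidt reduction in a slice at $\tilde u_0$, combined with Dancer's equivariant implicit function theorem (Theorem~\ref{G-ImplicitInfinite}). The kernel that appears in the statement is precisely the new (non-tangential) part of the kernel of the linearisation, and the proof will identify the bifurcation direction in that kernel using the invariance of an orthogonal splitting under $\cG_{\tilde u_0}$.

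First I would take a $\cG$-invariant tubular neighbourhood $U$ of $\cG(\tilde u_0)\times\{\lambda_0\}$ supplied by the Mostow--Palais slice theorem, so that each $(u,\lambda)\in U$ admits a decomposition $u=g(\tilde u_0+v)$ with $g\in\cG$ and $v$ in a small ball of the slice $\tilde{\bH}=T_{\tilde u_0}^{\perp}\cG(\tilde u_0)$. Since $\cG_{gw}=g\cG_w g^{-1}$, the equation $\nabla_u\Phi(\cdot,\lambda)=0$ is $\cG$-equivariant, and $\cG_{\tilde u_0}=\{e\}\times SO(N)$ is normal in $\cG=\Gamma\times SO(N)$, it suffices to prove the statement for those $u$ lying in the slice; the general case then follows by conjugation. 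For $u=\tilde u_0+v$ in the slice, the slice property also forces $\cG_u\subset\cG_{\tilde u_0}$.

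Second, I would split $\tilde{\bH}=K\oplus R$, where $K:=\ker\nabla^2_u\Phi_{|\bH_1^{\perp}}(\tilde u_0,\lambda_0)$ is the "new" kernel and $R:=K^{\perp}\cap\tilde{\bH}$; both summands are $\cG_{\tilde u_0}$-invariant since $\nabla^2_u\Phi(\tilde u_0,\lambda_0)$ commutes with the $\cG_{\tilde u_0}$-action. The operator $\nabla^2_u\Phi(\tilde u_0,\lambda_0)$ restricts to an isomorphism on $R$: on $R\cap \bH_1$ this uses the non-degeneracy~\eqref{eq:orbita} of the orbit (so $A$ is non-singular on $T_{u_0}^{\perp}\Gamma(u_0)$), and on $R\cap\bH_1^{\perp}$ it is the definition of $K$. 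Hence Theorem~\ref{G-ImplicitInfinite} applied to $P_R\nabla_u\Phi(\tilde u_0+v_K+v_R,\lambda)=0$ on a neighbourhood of $(0,\lambda_0)\in K\times\bR$ produces a unique $\cG_{\tilde u_0}$-equivariant branch $v_R=v_R(v_K,\lambda)\in R$ with $v_R(0,\lambda_0)=0$. Because $v_K=v_R=0$ solves the equation for every $\lambda$, the uniqueness in Theorem~\ref{G-ImplicitInfinite} forces $v_R(0,\lambda)\equiv 0$ near $\lambda_0$.

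Finally, given a non-trivial $(u,\lambda)\in (U\cap(\nabla_u\Phi)^{-1}(0))\setminus\cT$ with $u=\tilde u_0+v_K+v_R(v_K,\lambda)$ in the slice, the previous step forbids $v_K=0$ (otherwise $v_R=0$ and $u=\tilde u_0\in\cT$), so $\bar u:=v_K\in K\setminus\{0\}$. Since $P_K$ is $\cG_{\tilde u_0}$-equivariant and $\cG_u\subset\cG_{\tilde u_0}$, every $h\in\cG_u$ fixes $v_K+v_R$ and therefore fixes $v_K=P_K(v_K+v_R)$; hence $\cG_u\subset\cG_{\bar u}$. The main obstacle is exactly the invertibility of $\nabla^2_u\Phi(\tilde u_0,\lambda_0)$ on the full complement $R$ inside the slice (not merely inside $\bH_1^{\perp}$), which is the precise point at which the non-degeneracy assumption~\eqref{eq:orbita} enters; once that is in place, the $\cG_{\tilde u_0}$-equivariance of the reduction and the isotropy conclusion come directly from Theorem~\ref{G-ImplicitInfinite} and the equivariance of $P_K$.
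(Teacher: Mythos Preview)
Your proof is correct and follows essentially the same approach as the paper: an equivariant Lyapunov--Schmidt reduction that splits off $K=\ker\nabla_u^2\Phi_{|\bH_1^{\perp}}(\tilde u_0,\lambda_0)$, applies the equivariant implicit function theorem (Theorem~\ref{G-ImplicitInfinite}) on the complement, and concludes that any nontrivial solution has a nonzero $K$-component $\bar u$ whose isotropy contains $\cG_u$ by equivariance of the projection. The only difference is organisational: the paper works $\cG$-equivariantly on the decomposition $\bH=\bU_1\oplus K$ with $\bU_1=\im\nabla_u^2\Phi_{|\bH_1^{\perp}}(\tilde u_0,\lambda_0)\oplus\bH_1$ and obtains a $\cG$-equivariant map $\tau\colon\cG(\tilde u_0)\times K\times\bR\to\bU_1$, while you first reduce to the slice $\tilde\bH$ via a tubular neighbourhood and then apply the $\cG_{\tilde u_0}$-equivariant version at the single point $\tilde u_0$, recovering the general case by the conjugation argument you indicate.
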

\begin{proof}
Consider
$
\bU_1=\im \nabla_u^2\Phi_{|\bH_1^{\perp}}(\tilde{u}_0,\lambda_0)\oplus \bH_1$ and
$\bU_2=\ker \nabla_u^2\Phi_{|\bH_1^{\perp}}(\tilde{u}_0,\lambda_0).
$
Note that  $\bH=\bU_1\oplus\bU_2$ and the spaces $\bU_1$ and $\bU_2$ are ${\G}$-representations.
For $u\in\bH$ we put $u=(u_1,u_2)\in\bU_1\oplus\bU_2$.
In particular, since $\tilde{u}_0\in \bH_1$, we identify this element with $(\tilde{u}_0,0)\in \bU_1\oplus\bU_2$.

The equation
 \begin{equation}\label{SB}
\nabla_u\Phi(u,\lambda)=0
 \end{equation}
 is equivalent to the system
\begin{equation}\label{SB1}
\pi_1(\nabla_u\Phi(u_1,u_2,\lambda))=0,
\end{equation}
\begin{equation}\label{SB2}
\pi_2(\nabla_u\Phi(u_1,u_2,\lambda))=0.
\end{equation}
where $\pi_1\colon\bH\to\bU_1$ and $\pi_2\colon\bH\to\bU_2$ are ${\G}$-equivariant projections.
Moreover, since $\cG(\tilde{u}_0)\subset \bH_1\subset \bU_1$,
\[\dim\ker\nabla^2_u\Phi_{|\bU_1}(\tilde{u}_0,\lambda_{0})=\dim \cG(\tilde{u}_0),
\]
 i.e. $\cG (\tilde{u}_0)$ is a non-degenerate critical orbit of $\Phi(\cdot,\lambda_0)_{|\bU_1}$.
Therefore, by the equivariant implicit function theorem  (see Theorem \ref{G-ImplicitInfinite}) applied to the functional $\Phi\colon\bU_1\oplus(\bU_2 \times \bR) \rightarrow \bR$, the point $(0, \lambda_0)$ and the equation \eqref{SB1}, there exist open sets $\cO_{0}\subset \bU_2$, $\cO_{\lambda_0}\subset \bR$ such that $0\in \cO_{0},\lambda_0 \in \cO_{\lambda_0}$ and
a ${\G}$-equivariant map $\tau\colon \cG(\tilde{u}_0)\times \cO_{0}\times \cO_{\lambda_0}\to \bU_1 $ such that
\begin{enumerate}[(i)]
\item $\tau(u_1,0,\lambda_0)=u_1$ for $u_1\in\cG(\tilde{u}_0)$,
\item $\pi_1(\nabla_u\Phi(\tau(u_1,u_2,\lambda),u_2,\lambda))=0$ if $u_1 \in \cG(\tilde{u}_0), u_2\in \cO_{0}$ and $\lambda \in \cO_{\lambda_0}$ and these are the only solutions of $\pi_1(\nabla_u\Phi(u_1,u_2,\lambda))=0$ near the orbit if  $u_2\in \cO_{0}$ and $\lambda \in \cO_{\lambda_0}.$
\end{enumerate}

Hence all the solutions of the equation \eqref{SB1}, and consequently the solutions of \eqref{SB2} and  \eqref{SB}, can have (in the neighbourhood of the orbit) only the following isotropy groups:
\[
{\G}_{(\tau(u_1,u_2,\lambda),u_2,\lambda)}={\G}_{\tau(u_1,u_2,\lambda)}\cap {\G}_{u_2}\cap{\G}_{\lambda}=
{\G}_{\tau(u_1,u_2,\lambda)}\cap {\G}_{u_2}\subset {\G}_{u_2}.
\]

To finish the proof observe that in the case $u_2=0$ we have $(\tau(u_1,0,\lambda),0,\lambda)\in \bU_1\times\{0\}\times\bR$ for $u_1\in \cG(\tilde{u}_0)$, $\lambda\in\cO_{\lambda_0}$. Considering only the solutions of \eqref{SB1} and observing that such solutions in $\bU_1\times\{0\}\times\bR$ are the trivial ones, we obtain $(\tau(u_1,0,\lambda),0,\lambda)\in\cT$, which completes the proof.

\end{proof}

Lemma \ref{lem:IsGr} generalises the lemma due to Dancer from \cite{Dancer1979}. Dancer's result states that if the kernel of the second derivative of the functional at a bifurcation point does not contain nonzero radially-symmetric elements, then at a neighbourhood of this point all nontrivial solutions are not radial. This lemma cannot be applied to prove Theorem \ref{thm:SymmBreak} in the case $\dim \cG(\tilde{u}_0)>0$, since $\ker \nabla_u^2\Phi(\tilde{u}_0,\lambda_0)$ contains constant (and therefore radially symmetric) functions from the space tangent to the orbit.

\begin{proof}[Proof of Theorem \ref{thm:SymmBreak}]
Note that Theorem \ref{th:GLOB} implies that $\lambda_0\in GLOB$. Moreover, from Corollary \ref{cor:kernel} we have
\[
\ker \nabla_u^2\Phi_{|\bH_1^{\perp}}(\tilde{u}_0,\lambda_0)= \ker( Id - L_{\lambda_0 A})\cap \bH_1^{\perp}=\bV_{-\Delta}(\alpha_{j_1})^{\mu_{\lambda_0A}(\alpha_{j_1})}\oplus\ldots\oplus
\bV_{-\Delta}(\alpha_{j_s})^{\mu_{\lambda_0A}(\alpha_{j_s})}.
\]
Since $\alpha_{j_1},\ldots, \alpha_{j_s}\neq 0$ are such that $\bV_{-\Delta}(\alpha_{j_i})^{SO(N)}=\{0\}$ for every $i=1,\ldots,s$, we conclude that
\begin{equation}\label{eq:czymker}
\ker \nabla_u^2\Phi_{|\bH_1^{\perp}}(\tilde{u}_0,\lambda_0)^{SO(N)}=\{0\}.
\end{equation}
Lemma \ref{lem:IsGr} yields that there exists $U\subset \bH\times\bR$ such that if $\nabla_u \Phi(u,\lambda)=0$ and  $(u,\lambda)\in U\setminus \cT$ then there exists $\overline{u}\in\ker \nabla_u^2\Phi_{|\bH_1^{\perp}}(\tilde{u}_0,\lambda_0)\setminus \{0\}$ such that $\G_u\subset \G_{\overline{u}}$.
Since $\cG_{\tilde{u}_0}=\{e\}\times SO(N)$, to prove that $\cG_u\neq \cG_{\tilde{u}_0}$ it suffices to note that  the isotropy group of $\overline{u}$ is not of the form $H\times SO(N)$, where $H\in\sub (\Gamma)$.
Indeed, if ${\G}_{\overline{u}}= H\times SO(N)$, then $\overline{u}(\alpha^{-1} x)= \overline{u}(x)$ for every $\alpha\in SO(N)$, $x\in B^N$, i.e. $SO(N)_{\overline{u}}=SO(N)$ and therefore from \eqref{eq:czymker} we obtain $ \overline{u} =0$, which contradicts $\overline{u}\in\ker \nabla_u^2\Phi_{|\bH_1^{\perp}}(\tilde{u}_0,\lambda_0)\setminus \{0\}$.
\end{proof}

Note that if the assumptions of Theorem \ref{thm:SymmBreak} are satisfied, i.e. $\ker\nabla_u^2\Phi_{|\bH_1^{\perp}}(\tilde{u}_0,\lambda_0)^{SO(N)}=\{0\}$, then there is a neighbourhood $U$ of the bifurcation orbit such that all nontrivial solutions from $U$ are non-radial. In other words, in Theorem \ref{thm:SymmBreak} we obtain a connected family of orbits of non-radial solutions bifurcating from the set of radial ones.

\begin{Remark}\label{rem:radial}
Let $\lambda_0 \in BIF.$ By the proof of Lemma \ref{lem:IsGr} we deduce that there is a neighbourhood of the orbit $\cG(\tilde{u}_0)\times\{\lambda_0\}$ such that all nontrivial solutions of $\nabla_u\Phi(u,\lambda)=0$ can have only the isotropy groups of the form ${\G}_{\tau(u_1,u_2,\lambda)}\cap {\G}_{u_2}$. Note that $u_1\in \cG(\tilde{u}_0)$ and hence $\G_{u_1}=\{e\}\times SO(N)$.

Consider the additional assumption: $$\ker \nabla_u^2\Phi_{|\bH_1^{\perp}}(\tilde{u}_0,\lambda_0)^{SO(N)}=\ker \nabla_u^2\Phi_{|\bH_1^{\perp}}(\tilde{u}_0,\lambda_0).$$
Then ${\G}_{u_2}= \Gamma_{u_2}\times SO(N)$.
Therefore by the proof of Lemma \ref{lem:IsGr}, and since a ${\G}$-equivariant function $\tau$ increases isotropy groups (i.e. ${\G}_{(u_1,u_2,\lambda)} \subset {\G}_{\tau(u_1,u_2,\lambda)}$), we have
\[\G_{u_1}\cap\G_{u_2}= (\{e\}\times SO(N))\cap (\Gamma_{u_2}\times SO(N))=\{e\}\times SO(N)\subset {\G}_{\tau(u_1,u_2,\lambda)}\cap {\G}_{u_2},
 \]
 i.e. solutions of $\nabla_u\Phi(u,\lambda)=0$ in the neighbourhood of the orbit $ \cG(\tilde{u}_0) \times \{\lambda_0\}$ have isotropy groups of the form $H\times SO(N)$, where $H\in\sub(\Gamma)$. Hence all solutions from the neighbourhood of the orbit are radial.
\end{Remark}

\begin{Remark}
Fix $\lambda_0\in\Lambda$ and suppose that $\sigma(\lambda_0 A)\cap \sigma(-\Delta; B^N)\setminus\{0\}=\{\alpha_{j_1},\ldots, \alpha_{j_s}\}$ are such that $\alpha_{j_1},\ldots, \alpha_{j_s}\notin \cA_0$,
where $\cA_0$ is defined in Section \ref{sec:eigenspaces}. Then from Remark \ref{rem:bezA0} it follows that $\bV_{-\Delta}(\alpha_{j_i})^{SO(N)}=\{0\}$ and therefore the assumptions of Theorem \ref{thm:SymmBreak} are satisfied. Hence the global symmetry-breaking phenomenon occurs at the orbit $\cG(\tilde{u}_0)\times\{\lambda_{0}\}$.

\end{Remark}


\section{Illustration}\label{sec:illustration}

In this section  we discuss a few examples in order to illustrate the abstract results proved in the previous section. Using the properties of the eigenspaces of the Laplace operator (with the Neumann boundary conditions) on the ball, we verify assumptions (C1)--(C3). More precisely we apply the information collected in Subsection \ref{sec:eigenspaces}.

\medskip

\nt \bf{Example 1.} \rm
Consider the system \eqref{eq:neumann} for $N=2$ with the potential $F$ and $u_0 \in \nabla F^{-1}(0)$ satisfying assumptions (B1)--(B4). Assume that $\lambda_0 \in \bR \setminus\{0 \}$ and $\sigma(\lambda_0 A) \cap \sigma(-\Delta; B^2)\setminus\{0\}= \{\alpha\},$ where $\sqrt{\alpha}$ is not a root of $J_0'(x)=0$ for $J_0$ being the Bessel function of order $0$. Following the notation of Section \ref{sec:eigenspaces} it means that $\alpha \not \in \cA_0$.

In this situation, from Theorem \ref{thm:nieprzywiedlnosc} and Fact \ref{fact:opis} the assumption (C1) of Section 3 is satisfied. By Theorem \ref{th:GLOB}  we obtain that a global bifurcation  occurs  from the orbit $\cG(\tilde{u}_0) \times \{\lambda_0\}$.

Moreover, from Remark \ref{rem:bezA0} it follows that  $\bV_{-\Delta}(\alpha)^{SO(2)}=\{0\}.$ Then by Theorem \ref{thm:SymmBreak} the global symmetry breaking occurs at the orbit $\cG(\tilde{u}_0)\times \{\lambda_0\}.$

\medskip

\nt \bf{Example 2.} \rm
Consider the system \eqref{eq:neumann} for $N=2$ with the potential $F$ and $u_0 \in \nabla F^{-1}(0)$ satisfying assumptions (B1)--(B4).
Assume that $\lambda_0 \in \bR \setminus\{0 \},$ $\sigma(\lambda_0 A) \cap \sigma(-\Delta; B^2)\setminus\{0\}= \{\alpha_1, \ldots,\alpha_s \}$ and there exists $i\in \{1, \ldots,s\}$ such that $\sqrt{\alpha_i}$
is not a root of $J_0'(x)=0$.

As in Example 1, a global bifurcation  occurs  from the orbit $\cG(\tilde{u}_0) \times \{\lambda_0\}.$ If moreover   $\alpha_i \not \in \cA_0$ for all $i\in \{1, \ldots,s\}$,
then the global symmetry breaking occurs at the orbit $\cG(\tilde{u}_0)\times \{\lambda_0\}.$

\medskip

\nt \bf{Example 3.} \rm
Consider the system \eqref{eq:neumann} for $N=3$ with the potential $F$ and $u_0 \in \nabla F^{-1}(0)$ satisfying assumptions (B1)--(B4).
Assume that $\lambda_0 \in \bR \setminus\{0 \}$, $\sigma(\lambda_0 A) \cap \sigma(-\Delta; B^3)\setminus\{0\}= \{\alpha_1, \ldots,\alpha_s \}$ and  there exists $i\in \{1, \ldots,s\}$ such that
$\sqrt{\alpha_i}$ is not a solution of the equation:
$$J_{\frac12}'(x)-\frac{1}{2x}J_{\frac12}(x)=0,$$
 where $J_{\frac12}$ is the Bessel function of order $\frac12$.
Therefore $\alpha_i \not \in \cA_0.$

In this situation, since $\cH_l^3 \subset \bV_{-\Delta}(\alpha_i)$ for some $l > 0$ (by Fact \ref{fact:opis}),  the assumption (C1) is satisfied and from Theorem \ref{th:GLOB} we obtain that a global bifurcation  occurs  from the orbit $\cG(\tilde{u}_0) \times \{\lambda_0\}$.

Moreover, if $\alpha_i \not \in \cA_0$ for all $i\in \{1, \ldots,s\}$,
then  from Remark \ref{rem:bezA0} we conclude that  $\bV_{-\Delta}(\alpha_i)^{SO(3)}=\{0\}$ for all $i\in \{1, \ldots,s\}$. Therefore, by Theorem \ref{thm:SymmBreak} it follows that the global symmetry breaking occurs at the orbit $\cG(\tilde{u}_0)\times \{\lambda_0\}.$

\medskip

\nt \bf{Example 4.} \rm
Consider the system \eqref{eq:neumann} with the potential $F$ and $u_0 \in \nabla F^{-1}(0)$ satisfying assumptions (B1)--(B4).
Assume that $\lambda_0 \in \bR \setminus\{0 \}$ and that $\sigma(\lambda_0 A) \cap \sigma(-\Delta; B^N)\setminus\{0\}= \{\alpha_1, \ldots,\alpha_s \},$ where $\sqrt{\alpha_i}$ is a solution of the equation
\begin{equation*}
J'_{\frac{N-2}{2}}(x)-\frac{N-2}{2x} J_{\frac{N-2}{2}}(x)=0
\end{equation*}
for every $i \in \{1, \ldots, s\}$.

If there exists $i \in  \{1, \ldots,s\}$ such that  $\dim\bV_{-\Delta}(\alpha_i)>1$ then the assumption (C1) is satisfied and by Theorem \ref{th:GLOB} we obtain that a global bifurcation  occurs  from the orbit $\cG(\tilde{u}_0) \times \{\lambda_0\}$.

If $\dim\bV_{-\Delta}(\alpha_{i})=1$ for all $i\in \{1, \ldots,s\}$, then
we assume additionally that
$\sum_{i=1}^s \mu_{\lambda_0 A}(\alpha_i)-\mu_A(0)$ is an odd number. In this situation the assumption (C2) is satisfied and by Theorem \ref{th:GLOB}  we obtain that a global bifurcation  occurs  from the orbit $\cG(\tilde{u}_0) \times \{\lambda_0\}$.

Note that, if $\dim\bV_{-\Delta}(\beta_{i})=1$ for all $i\in \{1, \ldots,s\}$, then $\ker( Id - L_{\lambda_0 A})^{SO(N)}=\ker( Id - L_{\lambda_0 A})$ (see Remark \ref{rem:nontriviality}(2)).
Therefore, from Remark \ref{rem:radial}, we conclude that all nontrivial solutions at a neighbourhood of $\cG(\tilde{u}_0) \times \{\lambda_0\}$ (bifurcating from this orbit) are radial, i.e. there is no symmetry breaking at the orbit.

\medskip

\nt \bf{Example 5.} \rm Consider the system \eqref{eq:neumann} with the potential $F$ and $u_0 \in \nabla F^{-1}(0)$ satisfying assumptions (B1)--(B4).
Assume that $\lambda_0=0$.

If $m-\dim \ker A$ is odd, then the assumption (C3) is satisfied and we obtain a global bifurcation from the orbit $\cG(\tilde{u}_0) \times \{0\}.$ If $m-\dim \ker A > 0$, then Theorem \ref{thm:0} implies a local bifurcation from the orbit $\cG(\tilde{u}_0) \times \{0\}.$

As in Example 4, it is easy to see that all nontrivial solutions at a neighbourhood of the orbit are radial.

\section{Appendix}\label{sec:app}

In the following section, to make the paper self-contained, we collect some classical definitions and facts which we use to prove our main results.
\subsection{The equivariant  implicit function theorem}

Below we reformulate an equi\-va\-riant version of the implicit function theorem in infinite dimensional spaces, due to Dancer (see \cite{[Dancer]}, paragraph 3).

\begin{Theorem}\label{G-ImplicitInfinite}
Let $G$ be a compact Lie group and suppose that
\begin{enumerate}[(i)]
\item $\bH_1$, $\bH_2$ are Hilbert spaces, which are orthogonal $G$-representations,
\item $\Phi\colon \bH_1\oplus\bH_2\to \bR$ is a $G$-invariant functional of class $C^2$,
\item there is $v_0\in \bH_2$ such that
 $\nabla^2_u\Phi(u,v_0)$ is  Fredholm for every $u\in\bH_1$,
 there is $u_0\in\bH_1$ such that $\nabla_u\Phi(u_0,v_0)=0$ and $G(u_0)$  is a non-degenerate critical orbit of $\Phi(\cdot, v_0)$.
\end{enumerate}
Then there exist $\delta>0$ and a continuous $G$-invariant map $\tau\colon G(u_0)\times B_{\delta}(v_0,\bH_2)\to \bH_1$ such that
\begin{enumerate}
\item $\tau(u,v_0)=u$ on $G(u_0)$,
\item $\nabla_u\Phi(\tau(u,v),v)=0$ if $u\in G(u_0)$ and $v\in  B_{\delta}(v_0,\bH_2)$ and these are the only solutions of $\nabla_u\Phi(u,v)=0$ near $G(u_0)$ if $v\in  B_{\delta}(v_0,\bH_2)$,
\item for each $v\in  B_{\delta}(v_0,\bH_2)$, the map $u\mapsto \tau(u,v)$ is one-to-one.
\end{enumerate}

\end{Theorem}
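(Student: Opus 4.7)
My plan is to reduce the problem to the classical (non-equivariant) implicit function theorem on the slice normal to the orbit, and then exploit $G$-invariance of $\Phi$ to recover both the full gradient equation and the equivariance of the resulting solution map.

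The first step is to exploit non-degeneracy. Let $H:=\nabla_u^2\Phi(u_0,v_0)$ and $T:=T_{u_0}G(u_0)$. Since $G(u_0)$ is a non-degenerate critical orbit, $\ker H = T$, and $N:=T^{\perp}\subset\bH_1$ is a closed $G_{u_0}$-invariant subspace. Because $H$ is self-adjoint and Fredholm, $\operatorname{im} H = (\ker H)^{\perp} = N$, and hence $H|_N\colon N\to N$ is a continuous linear isomorphism. Let $P_N$ be the orthogonal projection onto $N$ and define the bifurcation function
\[
F\colon N\times\bH_2\to N,\qquad F(w,v):=P_N\nabla_u\Phi(u_0+w,v).
\]
Then $F(0,v_0)=0$ and $D_wF(0,v_0)=H|_N$ is invertible, so the classical implicit function theorem produces some $\delta_0>0$ and a continuous map $\sigma\colon B_{\delta_0}(v_0,\bH_2)\to N$ with $\sigma(v_0)=0$ such that $F(\sigma(v),v)=0$ and these are the only local zeros of $F$ near $(0,v_0)$. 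Uniqueness together with the $G_{u_0}$-invariance of $\Phi$ (on $\bH_1\oplus\bH_2$) forces $\sigma$ to be $G_{u_0}$-equivariant.

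The delicate step is promoting $F(\sigma(v),v)=0$ to the full equation $\nabla_u\Phi(u_0+\sigma(v),v)=0$. By construction the gradient at $(u_0+\sigma(v),v)$ lies in $T$, so one must rule out a nonzero tangential component. This is where $G$-invariance of $\Phi$ is essential: differentiating the identity $\Phi(\exp(t\xi)(u_0+\sigma(v)),v)\equiv\Phi(u_0+\sigma(v),v)$ in $t$ for every $\xi\in\mathfrak{g}$ shows that $\nabla_u\Phi(u_0+\sigma(v),v)$ is orthogonal to $T_{u_0+\sigma(v)}G(u_0+\sigma(v))$. For $v$ sufficiently close to $v_0$ this tangent space is a small perturbation of $T$; in particular $P_T$ restricts to it as an isomorphism, and therefore any element of $T$ orthogonal to it must vanish. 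I expect this quantitative comparison of two close tangent spaces --- rather than a purely algebraic identity --- to be the main obstacle, and it is the point at which $\delta_0$ may need to be shrunk to some $\delta\le\delta_0$.

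Finally, I would define $\tau(g u_0,v):=g(u_0+\sigma(v))$ on $G(u_0)\times B_{\delta}(v_0,\bH_2)$. Well-definedness on the homogeneous space $G(u_0)\simeq G/G_{u_0}$ follows from the $G_{u_0}$-equivariance of $\sigma$: assuming (as in the paper's intended use with $\bH_2=\bR$ parametrising $\lambda$) that $v_0$ is fixed by $G$, the group $G_{u_0}$ acts trivially on the parameter, hence fixes every $\sigma(v)$, so $g(u_0+\sigma(v))$ depends only on the coset $gG_{u_0}$. The three stated conclusions are then immediate: $\tau(\cdot,v_0)$ is the identity embedding since $\sigma(v_0)=0$; the only zeros of $\nabla_u\Phi(\cdot,v)$ near $G(u_0)$ form the orbit $\tau(G(u_0)\times\{v\})$ by translating the local uniqueness of the classical IFT around the orbit and invoking the $G$-equivariance of $\nabla_u\Phi$; and $u\mapsto\tau(u,v)$ is one-to-one for $v$ close to $v_0$ by a standard perturbation argument, starting from the injective base case $\tau(\cdot,v_0)=\mathrm{id}_{G(u_0)}$.
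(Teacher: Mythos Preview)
The paper does not prove this theorem; it is stated in the Appendix as a reformulation of Dancer's $G$-invariant implicit function theorem and simply referenced to \cite{[Dancer]}. So there is no ``paper's own proof'' to compare against, and your sketch should be judged on its own merits.

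Your overall strategy --- reduce to the slice $N=T_{u_0}^{\perp}G(u_0)$, apply the ordinary implicit function theorem there, kill the residual tangential component using $G$-invariance, and then propagate around the orbit --- is exactly Dancer's approach, and your handling of the tangential component (comparing $T_{u_0}G(u_0)$ with the nearby tangent space $T_{u_0+\sigma(v)}G(u_0+\sigma(v))$) is correct.

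There is, however, a genuine slip in the construction of $\tau$. You set $\tau(gu_0,v)=g\bigl(u_0+\sigma(v)\bigr)$ and argue well-definedness from the assumption that $G_{u_0}$ acts trivially on $\bH_2$. That assumption is \emph{not} part of the hypotheses, and it actually fails in one of the two places the paper invokes this theorem (in Lemma~\ref{lem:IsGr} the role of $\bH_2$ is played by $\bU_2\times\bR$, on which $\cG_{\tilde u_0}=\{e\}\times SO(N)$ acts nontrivially). The correct equivariant extension is
\[
\tau(gu_0,v)\;=\;g\bigl(u_0+\sigma(g^{-1}v)\bigr),
\]
and then well-definedness follows directly from the $G_{u_0}$-equivariance of $\sigma$ that you already established, with no extra hypothesis on the $\bH_2$-action (beyond $v_0$ being $G$-fixed so that $B_\delta(v_0,\bH_2)$ is $G$-invariant). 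With this fix the remaining steps --- transporting local uniqueness around the orbit via a tubular neighbourhood and the equivariance of $\nabla_u\Phi$, and the injectivity of $\tau(\cdot,v)$ for $v$ near $v_0$ --- go through as you indicate.
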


\subsection{Equivariant Conley index}\label{subsec:CI}

In this subsection we collect properties of the equivariant Conley index. For a fuller treatment we refer to \cite{[Bartsch]}, \cite{Geba} in the finite dimensional case and to \cite{Izydorek} for the infinite dimensional case.

Let $G$ be a compact Lie group and suppose that $\Omega$ is a $G$-invariant subset of a finite dimensional $G$-representation $\bV$.
The $G$-equivariant Conley index of an isolated invariant set of a (local) flow is defined as a $G$-homotopy type of a pointed $G$-space, see \cite{[Bartsch]}, \cite{Geba}.
If $f\colon\Omega\to\bV$ is a $G$-equivariant map of class $C^1$, then it generates a local $G$-flow $\eta$, such that $\eta(x_0,\cdot)$ is the local solution of the problem $y'(t)=f(y(t))$, $y(0)=x_0$.
 We denote by $CI_G(S, f)$ the Conley index of an isolated invariant set $S$ of the flow generated by $f$.

Put $S^{\bV}=D_1(0,\bV)/\partial D_1(0,\bV)$ and denote by $[S^{\bV}]_G$ a $G$-homotopy type of a pointed $G$-space $S^{\bV}$. From the definition of the Conley index and the Hartman--Grobman theorem there follows (see also \cite{SmoWass}):

\begin{Theorem}\label{CIjakosfera}
Let $f\colon \bV\to\bR$ be a $G$-invariant map of class $C^2$ and suppose that $v_0 \in\bV$ is such that $G(v_0)=\{v_0\}$, $\nabla f(v_0)=0$ and $\det\nabla^2 f(v_0)\neq 0$. Then $CI_G(\{v_0\}, -\nabla f)=[S^{\bV^-}]_G$, where $\bV^-$ is the direct sum of eigenspaces of $\nabla^2 f(v_0)$ corresponding to the negative eigenvalues.
\end{Theorem}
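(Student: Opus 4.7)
The plan is to reduce the problem to computing the Conley index of the linear flow $-\nabla^2 f(v_0)$ via the homotopy (continuation) invariance of the equivariant Conley index, and then exhibit an explicit $G$-invariant index pair for the linear hyperbolic flow whose quotient realises $[S^{\bV^-}]_G$.

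First I would reduce to the linear case. Without loss of generality translate so that $v_0=0$ (the translation is $G$-equivariant because $G(v_0)=\{v_0\}$, so $G$ acts linearly in the new coordinates). Set $A:=\nabla^2 f(0)$. Since $\nabla f$ is $G$-equivariant, its linearisation $A$ is a $G$-equivariant self-adjoint operator on $\bV$; because $\det A\neq 0$, the spectral decomposition yields an orthogonal $G$-invariant splitting $\bV=\bV^-\oplus\bV^+$ into negative and positive eigenspaces of $A$. Now consider the $G$-equivariant gradient homotopy
\[
h_t(v)=-(1-t)\nabla f(v)-tAv,\qquad t\in[0,1].
\]
Because $\nabla f(v)=Av+o(|v|)$ as $v\to 0$ and $A$ is invertible, there is a $G$-invariant neighbourhood $U$ of $0$ on which each $h_t$ vanishes only at $0$, so $\{0\}$ is an isolated invariant set of the flow generated by $h_t$ for every $t$. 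Homotopy invariance of the $G$-equivariant Conley index (Theorem \ref{thm:homotopy} as cited) then gives
\[
CI_G(\{0\},-\nabla f)=CI_G(\{0\},-A).
\]

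Next I would compute $CI_G(\{0\},-A)$ by an explicit $G$-index pair. The linear flow $\varphi_t=e^{-tA}$ splits as a product: on $\bV^-$ the eigenvalues of $-A$ are positive (expanding directions) and on $\bV^+$ they are negative (contracting). Put
\[
N:=D_1(0,\bV^-)\times D_1(0,\bV^+),\qquad L:=\partial D_1(0,\bV^-)\times D_1(0,\bV^+).
\]
Both $N$ and $L$ are $G$-invariant because $\bV^\pm$ are $G$-representations, and $(N,L)$ is a $G$-index pair for $\{0\}$: trajectories entering $N$ flow towards $\{0\}\times D_1(0,\bV^+)$ in the contracting direction and, if they ever leave $N$, must first exit through the expanding boundary $L$. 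Collapsing the exit set gives a $G$-homeomorphism
\[
N/L\;\cong_G\;S^{\bV^-}\wedge\bigl(D_1(0,\bV^+)\bigr)_+,
\]
and since $D_1(0,\bV^+)$ is $G$-equivariantly contractible to the origin, the right-hand side is $G$-homotopy equivalent to $S^{\bV^-}$. Therefore $CI_G(\{0\},-A)=[S^{\bV^-}]_G$, completing the proof.

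The only delicate point is the first step: verifying that $\{0\}$ remains an isolated invariant set along the whole homotopy $h_t$ on a fixed $G$-neighbourhood, so that continuation applies. This is routine but it is where the hypothesis $\det\nabla^2 f(v_0)\neq 0$ is really used; without hyperbolicity the deformation could create extra invariant sets entering $0$. Once this is settled, the index pair construction above is standard and $G$-equivariant essentially for free, because the spectral splitting $\bV=\bV^-\oplus\bV^+$ is automatically $G$-invariant. The conclusion is exactly the equivariant analogue of the classical fact that the Conley index of a non-degenerate rest point is the sphere of its unstable subspace.
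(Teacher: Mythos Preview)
Your argument is correct. The paper does not actually prove this theorem: it merely states that it ``follows from the definition of the Conley index and the Hartman--Grobman theorem'' and cites \cite{SmoWass}. Your approach is a genuine alternative to that hint. Instead of invoking an equivariant Hartman--Grobman theorem to conjugate the local flow of $-\nabla f$ to its linearisation, you deform the vector field itself to its linear part via the gradient homotopy $h_t$ and appeal to continuation invariance; the linear computation with the explicit index pair $(N,L)=(D^-\times D^+,\ \partial D^-\times D^+)$ is then the same in either route. Your method is arguably cleaner here, since equivariant topological conjugacy near a hyperbolic rest point requires some care, whereas the continuation step is immediate once you note that $\nabla^2_v\bigl((1-t)f(v)+\tfrac{t}{2}\langle Av,v\rangle\bigr)\big|_{v=0}=A$ is nondegenerate for all $t$, which is exactly the hypothesis of Theorem~\ref{thm:homotopy}. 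One small wording point: after translating by $v_0$ the $G$-action stays linear not because the translation is equivariant in general, but because $v_0$ is a fixed point, so $g\cdot(w+v_0)-v_0=gw$; your conclusion is right, the justification just needs that one line.
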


The following theorem is a direct consequence of the Continuation Property of the Conley index, see \cite{[Bartsch]}:

\begin{Theorem}\label{thm:homotopy}(Homotopy invariance)
Let $v_0 \in \bV$ be such that $G(v_0)=\{v_0\}$ and suppose that $f\in C^2(\bV\times [0,1], \bR)$
is $G$-invariant. If $\nabla_v f(v_0, t)=0$ and $\det\nabla_v^2 f(v_0,t)\neq 0$
for every $t \in [0,1]$, then
$$CI_{G}(\{v_0\},\nabla_v f(\cdot,0))= CI_G(\{v_0\},\nabla_v f(\cdot,1)).$$
\end{Theorem}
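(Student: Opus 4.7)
The statement is advertised by the authors as a direct consequence of the Continuation Property of the equivariant Conley index (see \cite{[Bartsch]}, \cite{Geba}), so the only substantive work is to verify that the one-parameter family of $G$-equivariant vector fields $h_t(v):=\nabla_v f(v,t)$ admits an isolating neighborhood of $\{v_0\}$ that is independent of~$t$. Once such a common isolating neighborhood is in hand, the Continuation Property applies directly to the admissible family and yields the equality of Conley indices at $t=0$ and $t=1$.

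First I would observe that $v_0$ is $G$-fixed, so every closed ball $D_\delta(v_0,\bV)$ is automatically $G$-invariant. Since $f\in C^2(\bV\times[0,1],\bR)$, the derivatives $\nabla_v f$ and $\nabla^2_v f$ are jointly continuous on $\bV\times[0,1]$, and by hypothesis $v_0$ is a non-degenerate critical point of $f(\cdot,t)$ for every $t$. In particular, $v_0$ is a hyperbolic rest point of the flow induced by $h_t$, and because the flow is gradient-like, the maximal invariant set contained in a sufficiently small isolating neighborhood of $v_0$ reduces to $\{v_0\}$ itself.

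The key technical step is uniform isolation: I claim there exists $\delta_0>0$ such that $D_{\delta_0}(v_0,\bV)$ contains no zero of $\nabla_v f(\cdot,t)$ other than $v_0$, for any $t\in[0,1]$. Were this to fail, one could extract sequences $t_n\in[0,1]$ with $t_n\to t^\ast$ and $v_n\to v_0$, $v_n\neq v_0$, such that $\nabla_v f(v_n,t_n)=0$. Combining this with $\nabla_v f(v_0,t_n)=0$ and the mean value theorem applied along the segment $[v_0,v_n]$ produces points $\xi_n$ with $\nabla^2_v f(\xi_n,t_n)(v_n-v_0)=0$. Setting $w_n:=(v_n-v_0)/|v_n-v_0|$ and using compactness of the unit sphere together with the continuity of $\nabla^2_v f$, a subsequential limit $w$ satisfies $|w|=1$ and $\nabla^2_v f(v_0,t^\ast)w=0$, contradicting $\det\nabla^2_v f(v_0,t^\ast)\neq 0$. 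Hence $D_{\delta_0}(v_0,\bV)$ is a common $G$-invariant isolating neighborhood of $\{v_0\}$ for all $h_t$.

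With this uniform isolation in place, $(t,v)\mapsto h_t(v)$ is a continuous deformation of $G$-equivariant $C^1$ vector fields for which $\{v_0\}$ is an isolated invariant set throughout the deformation. The Continuation Property of the equivariant Conley index then gives $CI_G(\{v_0\},h_0)=CI_G(\{v_0\},h_1)$, which is the desired conclusion. The only real obstacle—and it is a modest one—is the uniform isolation estimate above; once this is dispatched, everything else is a formal invocation of the Continuation Property.
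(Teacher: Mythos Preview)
Your proposal is correct and matches the paper's approach: the paper offers no proof beyond declaring the result ``a direct consequence of the Continuation Property of the Conley index, see \cite{[Bartsch]}'', and you have supplied exactly the verification (a uniform $G$-invariant isolating neighborhood of $\{v_0\}$ for the family $h_t$) needed to invoke that property. One minor technical remark: the mean value theorem does not give a single intermediate point $\xi_n$ for the vector-valued map $\nabla_v f$; the clean fix is the integral form $0=\int_0^1 \nabla_v^2 f\bigl(v_0+s(v_n-v_0),t_n\bigr)(v_n-v_0)\,ds$, after which dividing by $|v_n-v_0|$ and passing to the limit yields the same contradiction.
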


The Conley index of a flow generated by a gradient map is homotopy type of a pointed finite $G$-CW-complex, see Proposition 5.6 of \cite{Geba} for the proof.
With a $G$-homotopy type of a pointed finite $G$-CW-complex $X$ one can associate a $G$-equivariant Euler characteristic $\chi_G(X)$, which is an element of the Euler ring $U(G)$ with the unit $\bI=\chi_G(G/G ^+)$.
 The actions in $U(G)$ are defined by
\begin{equation}\label{eq:actionsUG}
\left.\begin{array}{rcl}
\chi_G(X)+ \chi_G(Y)&=&\chi_G(X\vee Y),\\
\chi_G(X)\star \chi_G(Y)&=&\chi_G(X\wedge Y),
\end{array} \right.
\end{equation}
where $X\vee Y$ is the wedge sum and $X\wedge Y$ is the smash product of pointed finite $G$-CW-complexes $X,Y$.
The full description of this theory one can find for example in   \cite{TomDieck1}, \cite{TomDieck}.

The following theorem is an immediate consequence of Lemma 3.4 of \cite{[GaRyb]}:
\begin{Theorem}\label{thm:nontrivialityofEC}
If the group $G$ is connected and $\bV$ is a nontrivial $G$-representation, then $\chi_G(S^{\bV})\neq \bI\in U(G)$.
\end{Theorem}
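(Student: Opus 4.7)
My plan is to reduce the claim to the special case of a representation with zero fixed-point subspace and then invoke the cited Lemma 3.4 of \cite{[GaRyb]}. Since $\bV$ is orthogonal and $G$ is compact, we have the $G$-invariant decomposition $\bV = \bV^G \oplus \bW$ with $\bW = (\bV^G)^{\perp}$. Nontriviality of $\bV$ forces $\bW \neq \{0\}$, and by construction $\bW^G = \{0\}$. The resulting linear $G$-isomorphism induces a $G$-homeomorphism of one-point compactifications $S^{\bV} \cong S^{\bV^G} \wedge S^{\bW}$, so using the multiplicativity of $\chi_G$ under smash product from \eqref{eq:actionsUG},
\[
\chi_G(S^{\bV}) = \chi_G(S^{\bV^G}) \star \chi_G(S^{\bW}).
\]

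Since $G$ acts trivially on $\bV^G$, every $G$-cell of $S^{\bV^G}$ has orbit type $G/G$, and hence $\chi_G(S^{\bV^G}) = (-1)^{\dim \bV^G}\,\bI$. Consequently $\chi_G(S^{\bV}) = (-1)^{\dim \bV^G}\,\chi_G(S^{\bW})$, and because $\bI$ is a unit of $U(G)$, the original statement reduces to the claim $\chi_G(S^{\bW}) \neq \pm \bI$ under the standing assumptions $\bW^G = \{0\}$, $\bW \neq \{0\}$, and $G$ connected. This reduced claim is exactly the content of Lemma 3.4 of \cite{[GaRyb]}. Heuristically, connectedness of $G$ combined with $\bW^G = \{0\}$ forces every $v \in \bW \setminus \{0\}$ to have stabiliser $G_v$ of strictly smaller dimension than $G$, so the $G$-CW decomposition of $S^{\bW}$ contributes generators $[G/H^+]$ of $U(G)$ with $H \subsetneq G$, which are genuinely different from $\bI = [G/G^+]$.

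The main obstacle is ruling out potential cancellations among these proper-orbit-type contributions, which a priori could combine to yield $\pm \bI$. This is exactly where the cited lemma does the real work, exploiting the structure of the Euler ring of a connected compact Lie group — in particular the interaction of maximal tori and Weyl groups, and the linear independence of the generators $[G/H^+]$ for $H \subsetneq G$ from $[G/G^+]$ in an appropriate quotient of $U(G)$ — to guarantee $\chi_G(S^{\bW}) \neq \pm \bI$. Once this lemma is invoked, the reduction above immediately yields $\chi_G(S^{\bV}) \neq \bI$, completing the proof.
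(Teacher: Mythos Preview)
Your argument and the paper's both rest on the same Lemma~3.4 of \cite{[GaRyb]}, but the paper applies it directly to $\bV$ with no reduction at all --- the word ``immediate'' signals that the lemma already covers an arbitrary nontrivial representation. Your decomposition $\bV=\bV^G\oplus\bW$ and the smash-product formula are correct, but they force you to need the \emph{stronger} conclusion $\chi_G(S^{\bW})\neq\pm\bI$ (the case $\neq -\bI$ is required when $\dim\bV^G$ is odd), and you simply assert that this is ``exactly'' the content of Lemma~3.4. The $-\bI$ case can in fact be dispatched in one line --- since $\bW^G=\{0\}$ one has $(S^{\bW})^G=S^0$, so the coefficient of $[G/G^+]$ in $\chi_G(S^{\bW})$ equals the reduced Euler characteristic of $S^0$, namely $+1$, which already rules out $-\bI$ --- but you should either supply that argument, verify that the cited lemma really gives the $\pm$ version, or simply drop the reduction and invoke the lemma for $\bV$ itself as the paper does.
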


Consider the potential $\varphi \colon \bR^n \times \bR \rightarrow \bR$ and assume that for $\lambda_-, \lambda_+ \in \bR$ the critical orbit $G(\tilde{u}_0)$  of $\varphi(\cdot, \lambda_{\pm})$ is non-degenerate. In Section  \ref{sec:main} we compare equivariant Conley indices $CI_{G} (G(\tilde{u}_0), \varphi(\cdot, \lambda_{\pm})).$ Using the result from \cite{PRS} one can reduce this problem to comparing the Euler characteristics of the Conley indices of potentials restricted to the space orthogonal to the orbit. More precisely, reasoning as in the proof of Corollary 3.2 of \cite{PRS}, from Theorem 3.1 of \cite{PRS} we obtain the following fact:

\begin{Fact}\label{cor:3.2}
Let $\Omega \subset \bR^n$ be an open and $G$-invariant subset and $\varphi \in C^2(\Omega\times \bR,\mathbb{R})$ be $G$-invariant. Moreover,  let $\lambda_-, \lambda_+ \in \bR$ and $G(\tilde{u}_0) \subset (\nabla_u \varphi(\cdot, \lambda_{\pm}))^{-1}(0)$ be a non-degenerate critical orbit.
Put $\phi=\varphi_{|T^{\perp}_{\tilde{u}_0}G(\tilde{u}_0)}$. If the pair $(G, G_{\tilde{u}_0})$ is admissible  and $$\chi_{G_{\tilde{u}_0}}(CI_{G_{\tilde{u}_0}}(\{ \tilde{u}_0\}, -\nabla_u\phi(\cdot, \lambda_-)) \neq \chi_{G_{\tilde{u}_0}}(CI_{G_{\tilde{u}_0}}(\{ \tilde{u}_0\}, -\nabla_u\phi(\cdot, \lambda_+))$$ then
$$CI_{G}(G(\tilde{u}_0), -\nabla_u\varphi(\cdot, \lambda_-)) \neq CI_{G}(G(\tilde{u}_0),- \nabla_u\varphi(\cdot, \lambda_+)).$$
Moreover,
$$\chi_{G}(CI_{G}(G(\tilde{u}_0), -\nabla_u\varphi(\cdot, \lambda_-))) \neq \chi_{G}(CI_{G}(G(\tilde{u}_0),- \nabla_u\varphi(\cdot, \lambda_+))).$$
\end{Fact}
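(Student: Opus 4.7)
The plan is to reduce the $G$-equivariant Conley index of the orbit $G(\tilde u_0)$ to data living on the normal slice, where the isotropy group $G_{\tilde u_0}$ acts, and then use the admissibility hypothesis to ensure that an inequality detected by $\chi_{G_{\tilde u_0}}$ on the slice lifts to an inequality detected by $\chi_G$ on the full space.

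First, I would invoke Theorem 3.1 of \cite{PRS}. Since $G(\tilde u_0)$ is a non-degenerate critical orbit of $\varphi(\cdot,\lambda_\pm)$, the flow generated by $-\nabla_u\varphi(\cdot,\lambda_\pm)$ is, in a $G$-invariant tubular neighbourhood of the orbit, conjugate to the product of the trivial flow along the orbit and the flow generated by $-\nabla_u\phi(\cdot,\lambda_\pm)$ on the slice $T^{\perp}_{\tilde u_0}G(\tilde u_0)$. The cited theorem packages this into a $G$-homotopy equivalence
\[
CI_G\bigl(G(\tilde u_0), -\nabla_u\varphi(\cdot,\lambda_\pm)\bigr)
\simeq_G
G_+\wedge_{G_{\tilde u_0}} CI_{G_{\tilde u_0}}\bigl(\{\tilde u_0\},-\nabla_u\phi(\cdot,\lambda_\pm)\bigr),
\]
i.e.\ the $G$-Conley index of the orbit is the associated bundle built from the $G_{\tilde u_0}$-Conley index on the slice.

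Second, I would apply $\chi_G$ to both sides and use the multiplicativity \eqref{eq:actionsUG} together with the induction/transfer structure of the Euler ring. The bundle construction $G_+\wedge_{G_{\tilde u_0}}(-)$ induces a natural additive map $\Psi\colon U(G_{\tilde u_0})\to U(G)$. Here is where admissibility of $(G,G_{\tilde u_0})$ is used: since distinct $G_{\tilde u_0}$-conjugacy classes of subgroups of $G_{\tilde u_0}$ remain distinct $G$-conjugacy classes, the Euler characteristic of a pointed finite $G_{\tilde u_0}$-CW-complex and the Euler characteristic of its associated $G$-CW-bundle carry the same stratification information; consequently $\Psi$ separates the two Euler characteristics $\chi_{G_{\tilde u_0}}(CI_{G_{\tilde u_0}}(\{\tilde u_0\}, -\nabla_u\phi(\cdot,\lambda_\pm)))$ under the assumed inequality. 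This yields the ``moreover'' conclusion
\[
\chi_G\bigl(CI_G(G(\tilde u_0),-\nabla_u\varphi(\cdot,\lambda_-))\bigr)\neq \chi_G\bigl(CI_G(G(\tilde u_0),-\nabla_u\varphi(\cdot,\lambda_+))\bigr).
\]

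Third, the inequality of $G$-homotopy types of the Conley indices themselves follows for free: $G$-homotopy equivalent pointed $G$-CW-complexes have equal $G$-equivariant Euler characteristics, so the Euler characteristic inequality above forces $CI_G(G(\tilde u_0),-\nabla_u\varphi(\cdot,\lambda_-))\neq CI_G(G(\tilde u_0),-\nabla_u\varphi(\cdot,\lambda_+))$.

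The main obstacle is the first step, namely the rigorous identification of the $G$-homotopy type of the orbit's Conley index with the twisted product built from the slice's $G_{\tilde u_0}$-Conley index; this is a genuine structural result about isolated critical orbits and is exactly what Theorem 3.1 of \cite{PRS} delivers, so I would use it as a black box. Everything after that is bookkeeping in the Euler ring, with admissibility serving as the precise algebraic hypothesis that prevents different isotropy strata on the slice from being collapsed under the induction $\Psi$, exactly mirroring the argument of Corollary 3.2 in \cite{PRS}.
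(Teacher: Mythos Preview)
Your proposal is correct and follows essentially the same approach as the paper: the paper simply states that the fact is obtained from Theorem 3.1 of \cite{PRS} by reasoning as in the proof of Corollary 3.2 of \cite{PRS}, which is precisely the slice-reduction-plus-admissibility argument you outline. Your write-up is in fact more detailed than the paper's, which treats the result as an immediate consequence of the cited reference.
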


Suppose now that $\cU$ is a $G$-invariant subset of an infinite dimensional Hilbert space, which is an orthogonal $G$-representation $\bH$.
The $G$-equivariant Conley index of an isolated invariant set of a (local) $G$-$\mathcal{LS}$-flow is defined as a $G$-homotopy type of a $G$-equivariant spectrum, see \cite{Izydorek}.
As before, if $F\colon\cU\to\bH$ is a $G$-equivariant map of class $C^1$ and it is a completely continuous perturbation of the identity, then it generates a local $G$-$\mathcal{LS}$-flow.
We denote by $\mathcal{CI}_G(S, F)$ the Conley index of an isolated invariant set $S$ of the flow generated by $F$.

\subsection{Eigenspaces of the Laplace operator}\label{sec:eigenspaces}

In this subsection we introduce basic pro\-per\-ties of the eigenspaces of the Laplace operator (with the Neumann boundary conditions) on the ball. More precisely, we study the problem:
\begin{equation} \label{eq:appeig}
\left\{
\begin{array}{rclcl}  -  \triangle u & = & \beta u  & \text{ in } & B^N \\
                   \frac{\partial u}{\partial \nu} & = &  0 &\text{ on    } & S^{N-1}.
\end{array}
\right.
\end{equation}
These properties are known, but it is difficult to find a reference in the literature, except for the case $N=2, 3$, see for example \cite{[MICH]}, \cite{Mucha}. To make our article self-contained, we sketch here the general case.

Let $\cH^N_l$ denote the linear space of harmonic, homogeneous polynomials of $N$ independent variables, of degree $l$, restricted to the sphere $S^{N-1}.$

\begin{Theorem}\label{thm:nieprzywiedlnosc}
The spaces $\cH^N_l$ are irreducible representations of the group $SO(N)$. Furthermore, if $l \geq 1$ then $\cH^N_l$ is a nontrivial representation of $SO(N)$ and for $l=0$ it is a trivial one. Moreover,
\[\dim\cH^N_l=\left\{\begin{array}{ccc}
1& \mathrm{ if } & N=2, l=0 \\
2 & \mathrm{if} & N=2, l \geq 1 \\
 (2l+N-2)\frac{(N-3+l)!}{l!(N-2)!} & \mathrm{if} & N \geq 3, l \geq 0.\end{array}  \right.\]
\end{Theorem}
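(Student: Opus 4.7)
The plan is to treat the three claims — dimension, (non)triviality, and irreducibility — separately, relying on the classical decomposition of homogeneous polynomials together with zonal spherical harmonics. Write $\mathcal{P}^N_l$ for the space of real homogeneous polynomials of degree $l$ in $N$ variables, so that $\dim \mathcal{P}^N_l = \binom{N+l-1}{l}$. First I would establish the decomposition $\mathcal{P}^N_l = \cH^N_l \oplus r^2 \mathcal{P}^N_{l-2}$ with $r^2 = x_1^2 + \cdots + x_N^2$; this follows from the non-degeneracy of the apolar pairing $\langle p,q\rangle := [p(\partial)q](0)$ on $\mathcal{P}^N_l$, under which the Laplacian $\Delta\colon \mathcal{P}^N_l \to \mathcal{P}^N_{l-2}$ is adjoint to multiplication by $r^2$ (so surjectivity of one follows from injectivity of the other). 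Subtracting dimensions yields $\dim \cH^N_l = \binom{N+l-1}{l}-\binom{N+l-3}{l-2}$, and an elementary binomial computation reduces this to $(2l+N-2)\frac{(N+l-3)!}{l!(N-2)!}$ for $N\geq 3$; the values $1$ (for $l=0$) and $2$ (for $l\geq 1$) in the case $N=2$ follow by direct inspection.

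For the (non)triviality assertion, $\cH^N_0$ consists of constants, hence is the trivial representation. For $l \geq 1$, any nonzero $f \in \cH^N_l$ is a nonconstant function on $S^{N-1}$, and transitivity of the $SO(N)$-action on $S^{N-1}$ forces some rotation to move $f$, so $\cH^N_l$ is nontrivial.

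The irreducibility is handled via zonal harmonics. Fix $e_N \in S^{N-1}$ with stabilizer $H \cong SO(N-1)$. The key lemma is that $(\cH^N_l)^H$ is one-dimensional: an $H$-invariant polynomial in $\bR^N$ depends only on $x_N$ and $|x'|^2$ with $x' = (x_1,\ldots,x_{N-1})$, and homogeneity plus harmonicity then pin it down (up to a scalar) as the Gegenbauer polynomial $Z_l$ evaluated at $x_N$ on $S^{N-1}$. Now let $W \subset \cH^N_l$ be a nonzero $SO(N)$-invariant subspace, pick $f \in W$ and $\xi_0 \in S^{N-1}$ with $f(\xi_0) \neq 0$, and translate so that $\xi_0 = e_N$. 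Averaging $\bar f(x) := \int_H f(h^{-1}x)\,dh$ yields an $H$-invariant element of $W$ with $\bar f(e_N) = f(e_N) \neq 0$, hence $\bar f$ is a nonzero multiple of $Z_l$ and $Z_l \in W$. Applying all of $SO(N)$ shows that every $SO(N)$-translate of $Z_l$ lies in $W$, and the reproducing kernel identity $K_l(x,y) = c_{l,N}Z_l(x\cdot y)$ for $\cH^N_l \subset L^2(S^{N-1})$ implies that these translates span $\cH^N_l$; thus $W = \cH^N_l$. The case $N=2$ is immediate: $\cH^2_l$ for $l\geq 1$ is spanned by $\mathrm{Re}(z^l)$ and $\mathrm{Im}(z^l)$, which realises the standard real irreducible rotation representation of weight $l$.

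The main obstacle will be the one-dimensionality of $(\cH^N_l)^H$ together with the reproducing-kernel identity $K_l(x,y)=c_{l,N}Z_l(x\cdot y)$ that drives the final spanning step. Both are classical, but a self-contained proof requires either invoking the theory of Gegenbauer polynomials or giving a direct ODE/orthogonality argument on the sphere; everything else in the plan is essentially bookkeeping around the averaging over $H$.
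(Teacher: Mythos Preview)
Your proposal is correct and follows the standard textbook route to these facts. Note, however, that the paper does not actually prove this theorem at all: immediately after the statement it simply refers the reader to Gurarie (Theorem~5.1) for irreducibility and to Shimakura (Theorem~4.1) for the dimension formula. So there is no ``paper's own proof'' to compare against beyond those citations.

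What you have written is essentially the argument one finds in those references (and in most treatments of spherical harmonics): the harmonic decomposition $\mathcal{P}^N_l=\cH^N_l\oplus r^2\mathcal{P}^N_{l-2}$ for the dimension count, and the zonal-harmonic/averaging argument over the stabiliser $H\cong SO(N-1)$ for irreducibility. Your self-assessment of the sticking point is accurate: the two nontrivial ingredients are $\dim(\cH^N_l)^H=1$ (for $N\geq 3$) and the identification of the reproducing kernel with a rotate of the zonal harmonic. Both are classical but do require a short argument (Gegenbauer recursion or a direct ODE uniqueness statement). You correctly separate off $N=2$, where $H$ is trivial and the zonal argument degenerates, and handle it by hand via $\mathrm{Re}(z^l),\,\mathrm{Im}(z^l)$. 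Nothing in your outline is wrong or circular; it would constitute a genuine self-contained proof where the paper only gives pointers.
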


For the proof of the irreducibility of the spaces $\cH^N_l$ we refer the reader to \cite{Gurarie} (Theorem 5.1). The proof of the latter part of the theorem can be found in \cite{Shimakura} (Theorem 4.1).

To find eigenspaces of the equation \eqref{eq:appeig} we write the Laplacian in polar coordinates $r\geq 0$, $\varphi=(\varphi_1,\ldots, \varphi_N)$, $0\leq \varphi_i< \pi$ for $i=1,\ldots, N-1$, $0\leq \varphi_N< 2\pi$:
\begin{equation*}
\Delta u = r^{1-N}\frac{\partial}{\partial r}\left(r^{N-1}\frac{\partial u}{\partial r}\right) + \frac{1}{r^2}\Delta_{S^{N-1}} u,
\end{equation*}
where $\Delta_{S^{N-1}}$ is the Laplace--Beltrami operator on $S^{N-1}$. Applying a standard separation  of variables $u(\varphi,r)=v(\varphi)\cdot f(r)$ to \eqref{eq:appeig},  we obtain the system
\begin{eqnarray}\label{eq:evsphere}
-\Delta_{S^{N-1}}v(\varphi)&=&\mu v(\varphi)\ \text{ on }\ S^{N-1},\\
\label{eq:radialpart}
r^2 f''(r)+(N-1)rf'(r)+\left(\beta r^2-\mu\right) f(r)&=&0\ \text{ on }\  (0,1), \\
\label{eq:bound1}
|f(0)|&<&\infty,\\
\label{eq:bound2}
f'(1)&=&0.
\end{eqnarray}
The equation \eqref{eq:evsphere} has solutions only if $\mu$ is an eigenvalue of $-\Delta_{S^{N-1}}$, i.e. $\mu=\mu_l: =l(l+N-2)$, $l=0,1,\ldots$, with associated eigenspaces equal  $\cH^N_l$, see \cite{Shimakura}. Substituting $\mu=\mu_l$, $\rho=\sqrt{\beta} r$ and $f(r)=g(\rho)/\rho^{\frac{N-2}{2}}$ into \eqref{eq:radialpart}, we get the Bessel equation of order $l+\frac{N-2}{2}$:
\begin{equation*}
\rho^2 g''(\rho)+\rho g'(\rho)+\left( \rho^2-\left(l+\frac{N-2}{2}\right)^2\right) g(\rho)=0\ \text{ on }\ (0,\sqrt{\beta}).
\end{equation*}
Using \eqref{eq:bound1} we obtain that the solution of this equation is $g(\rho)=C_lJ_{l+\frac{N-2}{2}}(\rho)$,  where $C_l\in\bR$ and $J_{l+\frac{N-2}{2}}$  is the Bessel function of the first kind of order $l+\frac{N-2}{2}$.

Since we are interested only in solutions satisfying \eqref{eq:bound2}, taking into consideration that
$f'(r)=\sqrt{\beta}(\sqrt{\beta}r)^{1-\frac{N}{2}}\left(g'(\sqrt{\beta}r)-\frac{N-2}{2\sqrt{\beta} r} g(\sqrt{\beta} r)\right)$,
we obtain that $\sqrt{\beta}$ satisfies the equation:
\begin{equation}\label{eq:beta}
J'_{l+\frac{N-2}{2}}(x)-\frac{N-2}{2x} J_{l+\frac{N-2}{2}}(x)=0.
\end{equation}
For $m\in \bN$ we denote by $x_{lm}$ the $m$th solution of \eqref{eq:beta} in $(0, \infty)$. Put $x_{00}=0$ and
$\cA_l=\{\beta_{lm} = x^2_{lm}\}_{m=1}^{\infty}$ for $l>0$  and $\cA_0=\{\beta_{0m}=x_{0m}^2\}_{m=0}^{\infty}$.

\begin{Fact}\label{fact:opis}
From the above considerations:
\begin{enumerate}
\item $\sigma(-\Delta;B^N)$ is the union of the sets $\cA_l$,
\item if $\beta\in \cA_l$, then $\cH^N_l\subset\bV_{-\Delta}(\beta)$, i.e. $\cH^N_l$ is $SO(N)$-equivalent to a subspace of $\bV_{-\Delta}(\beta)$.
 For $\beta\in\sigma(-\Delta;B^N)$
 we have $\bV_{-\Delta}(\beta)\approx_{SO(N)}\bigoplus\limits_{l\in\{l\geq0\colon \beta\in\cA_l\}} \cH^N_l$ (by $\approx_{SO(N)}$ we understand the equivalence relation of $SO(N)$-representations).
 \end{enumerate}
\end{Fact}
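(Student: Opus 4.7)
The plan is to leverage the separation-of-variables calculation carried out just before the statement and to upgrade it from ``these are some eigenfunctions'' to ``these are \emph{all} eigenfunctions, organised as $SO(N)$-representations.'' The key classical input is the completeness of the spherical harmonic decomposition: every $u\in L^2(B^N)$ admits a unique expansion
\[
u(r\varphi)=\sum_{l\geq 0}\sum_{k} f_{l,k}(r)\,Y_{l,k}(\varphi),\qquad r\in[0,1],\ \varphi\in S^{N-1},
\]
where $\{Y_{l,k}\}_k$ is an orthonormal basis of $\cH^N_l$. I would first recall that this expansion converges in $H^1(B^N)$ when $u\in H^1(B^N)$, and that the Neumann boundary condition $\partial u/\partial\nu=0$ on $S^{N-1}$ translates, termwise, into $f'_{l,k}(1)=0$.

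For part (1), the inclusion $\bigcup_l \cA_l\subset\sigma(-\Delta;B^N)$ is the easy direction: for each $\beta=x_{lm}^2\in\cA_l$ and each $Y\in\cH^N_l$, the function
\[
u(r\varphi)=\rho^{-(N-2)/2}J_{l+(N-2)/2}(\sqrt{\beta}\,r)\cdot Y(\varphi)
\]
is smooth across the origin (by the boundary condition \eqref{eq:bound1} encoded in the choice of Bessel function of the first kind), satisfies the radial equation, and fulfils the Neumann condition by the very definition of $x_{lm}$ via \eqref{eq:beta}; hence it is a classical (and a fortiori weak) eigenfunction. For the reverse inclusion, I would take any eigenfunction $u$ with eigenvalue $\beta$, insert its spherical harmonic expansion into $-\Delta u=\beta u$, and use the orthogonality of the $\cH^N_l$'s under $-\Delta_{S^{N-1}}$ to conclude that each nonzero radial profile $f_{l,k}$ solves \eqref{eq:radialpart}--\eqref{eq:bound2}. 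The preceding analysis showed that this forces $\sqrt{\beta}$ to satisfy \eqref{eq:beta}, i.e.\ $\beta\in\cA_l$ for some $l$ with $f_{l,k}\not\equiv 0$.

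For part (2), fix $\beta\in\sigma(-\Delta;B^N)$ and let $L(\beta)=\{l\geq 0\colon \beta\in\cA_l\}$. For each $l\in L(\beta)$, let $\varphi_l(r)=\rho^{-(N-2)/2}J_{l+(N-2)/2}(\sqrt{\beta}\,r)$ be the unique (up to scalar) radial profile produced above. The map
\[
T_l\colon \cH^N_l\longrightarrow \bV_{-\Delta}(\beta),\qquad T_l(Y)(r\varphi)=\varphi_l(r)\,Y(\varphi),
\]
is linear and injective. Because $SO(N)$ acts on $B^N$ by rotations, which preserve the radial coordinate, $T_l$ commutes with the $SO(N)$-actions, so it is an $SO(N)$-equivariant embedding, giving the first half of (2). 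The decomposition then follows by combining the spherical-harmonic expansion of an arbitrary $u\in\bV_{-\Delta}(\beta)$ with the uniqueness argument of part (1): only indices $l\in L(\beta)$ can contribute, and within each such $l$ the radial factor is forced to be a scalar multiple of $\varphi_l$, so $u=\sum_{l\in L(\beta)} T_l(Y_l)$ for some $Y_l\in\cH^N_l$. This produces the claimed $SO(N)$-isomorphism $\bV_{-\Delta}(\beta)\approx_{SO(N)}\bigoplus_{l\in L(\beta)}\cH^N_l$.

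The main obstacle I anticipate is purely technical rather than conceptual: making rigorous the passage from ``formal Fourier expansion in spherical harmonics'' to ``termwise satisfaction of the equation and of the Neumann condition.'' This requires either invoking elliptic regularity (so that eigenfunctions are smooth up to the boundary, letting us manipulate classical derivatives) or working in the Sobolev scale and justifying that $-\Delta$ respects the spherical-harmonic orthogonal decomposition. Once this is in place, the rest of the argument is the direct translation of the separation-of-variables computation already displayed before the statement, together with Theorem \ref{thm:nieprzywiedlnosc} guaranteeing that each $\cH^N_l$ is irreducible (hence the listed decomposition is the isotypical one).
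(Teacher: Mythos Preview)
Your proposal is correct and follows essentially the same route as the paper: the paper treats this Fact as an immediate consequence of the preceding separation-of-variables computation (it is stated with no proof beyond the phrase ``From the above considerations''), and what you have written is precisely a careful elaboration of that computation, supplying the completeness argument (via the spherical-harmonic expansion) that the paper leaves implicit. Your identification of the one genuine technical point---justifying the termwise manipulation via elliptic regularity---is accurate, and once that is invoked the rest is exactly the argument the paper has in mind.
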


\begin{Remark}\label{rem:nontriviality}
Since from Theorem \ref{thm:nieprzywiedlnosc} we have $\dim\cH^N_0=1$ and  $\dim\cH^N_l>1$ for $l\geq 1$, it follows that for $\beta\in \sigma(-\Delta;B^N)$:
\begin{enumerate}
\item if $\dim\bV_{-\Delta}(\beta)>1$, then there exists $l>0$ such that $\cH^N_l\subset\bV_{-\Delta}(\beta)$ and thus $\bV_{-\Delta}(\beta)$ is a nontrivial $SO(N)$-representation,
\item if $\dim\bV_{-\Delta}(\beta)=1$, then $\bV_{-\Delta}(\beta)\approx_{SO(N)}\cH^N_0$ and therefore it is a trivial representation of $SO(N)$.
\end{enumerate}
\end{Remark}

\begin{Remark}\label{rem:bezA0}
From Theorem \ref{thm:nieprzywiedlnosc} we obtain that if  $\beta\in \sigma(-\Delta;B^N)$ and $\beta\notin\cA_0$, then $\bV_{-\Delta}(\beta)^{SO(N)}=\{0\}$.
\end{Remark}

To illustrate the above description of the eigenspaces, we will look more closely at the cases  $N=2,3$.

Suppose that $N=2$. Then, for $l\in\bN\cup\{0\}$, the equation \eqref{eq:beta} is of the form $J_l'(x)=0$ and therefore $x_{lm}$ is the $m$th solution of $J_l'(x)=0$ in $(0, \infty)$ and $x_{00}=0$.

\begin{Fact}\label{lem:eigenspace}
Under the above notation,
$\sigma(-\Delta; B^2 )=\bigcup_{l=0}^{\infty}\cA_l=\{\beta_{lm} = x^2_{lm}\}_{l=1,m=1}^{\infty} \cup \{\beta_{0m}=x_{0m}^2\}_{m=0}^{\infty}$ with corresponding eigenvectors  given by
\begin{enumerate}
\item $v_{lm}^1(r,\phi)=J_l(x_{lm}r)\cos l \varphi$ and $v_{lm}^2(r,\phi)=J_l(x_{lm}r)\sin l \varphi$ for  $\beta_{lm}$ in the case $l>0,$
 \item $v_{0m}(r,\phi)=J_0(x_{0m}r)$ for $\beta_{0m}$ in the case $l=0.$
\end{enumerate}
\end{Fact}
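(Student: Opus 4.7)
The plan is to specialise the general analysis preceding the statement (equations \eqref{eq:evsphere}--\eqref{eq:beta}) to the case $N=2$ and identify the resulting ingredients explicitly.

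First I would pass to polar coordinates $(r,\varphi)$ on $B^2$ and separate variables $u(r,\varphi)=v(\varphi)f(r)$ in the Neumann eigenvalue problem \eqref{eq:appeig}. The angular equation \eqref{eq:evsphere} on $S^1$ reduces to $-v''(\varphi)=\mu\, v(\varphi)$ with $2\pi$-periodic boundary conditions, whose spectrum is $\{\mu_l=l^2:\, l=0,1,2,\ldots\}$; the eigenspace for $l=0$ is spanned by the constant function $1$, and for $l\geq 1$ by $\cos l\varphi$ and $\sin l\varphi$. This is consistent with Theorem \ref{thm:nieprzywiedlnosc} applied with $N=2$, which gives $\dim\cH^2_0=1$ and $\dim\cH^2_l=2$ for $l\geq 1$, and identifies $\cH^2_l$ with this angular eigenspace.

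Next I would solve the radial part. Substituting $N=2$ and $\mu=l^2$ into \eqref{eq:radialpart}, setting $\rho=\sqrt{\beta}\,r$ and $f(r)=g(\rho)/\rho^{(N-2)/2}=g(\rho)$ (since $(N-2)/2=0$), the equation becomes the Bessel equation of order $l$. The boundedness condition \eqref{eq:bound1} forces $g(\rho)=C_l J_l(\rho)$, so $f(r)=C_l J_l(\sqrt{\beta}\,r)$. The Neumann condition \eqref{eq:bound2}, using $(N-2)/(2x)=0$, then collapses \eqref{eq:beta} to $J_l'(x)=0$ evaluated at $x=\sqrt{\beta}$. Hence the admissible eigenvalues for fixed $l$ are exactly $\beta_{lm}=x_{lm}^2$ with $x_{lm}$ the positive zeros of $J_l'$, together with the exceptional value $x_{00}=0$ (the constant eigenfunction) belonging to $\cA_0$.

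Combining the angular and radial parts produces the claimed eigenfunctions: for $l\geq 1$ the products $J_l(x_{lm}r)\cos l\varphi$ and $J_l(x_{lm}r)\sin l\varphi$, and for $l=0$ the radial function $J_0(x_{0m}r)$. Finally, the description $\sigma(-\Delta;B^2)=\bigcup_{l=0}^\infty \cA_l$ and the exhaustion of every eigenspace by these functions is a consequence of Fact \ref{fact:opis}, or equivalently of the completeness of the basis obtained by separation of variables in polar coordinates, which follows from the spectral theorem together with the density of products of Fourier modes in $\varphi$ and Fourier--Bessel expansions in $r$. I do not expect a genuine obstacle here: everything is a direct specialisation: the one point worth checking carefully is that $x_{00}=0$ must be included to account for the constant eigenfunction (since $J_0'(0)=0$), which justifies the separate indexing convention for $\cA_0$.
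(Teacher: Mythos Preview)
Your proposal is correct and follows essentially the same approach as the paper: the paper does not give a separate proof of this Fact but merely notes, in the sentence immediately preceding it, that for $N=2$ the equation \eqref{eq:beta} becomes $J_l'(x)=0$, so the Fact is intended as a direct specialisation of the general analysis \eqref{eq:evsphere}--\eqref{eq:beta} and of Fact~\ref{fact:opis}. Your write-up is simply a more detailed version of that same specialisation.
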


Note that from the above fact it follows that
$\cH^2_l\approx_{SO(2)}\operatorname{span}\{v_{lm}^1, v_{lm}^2\}$ for $l> 0$ and $\cH^2_0\approx_{SO(2)}\operatorname{span}\{ v_{0m}\}$.

\begin{Corollary} \label{cor:sone}
Let $\beta \in \sigma(-\Delta; B^2 ),$ then

\begin{enumerate}
\item If $\beta \in\cA_l$ for $l>0$, i.e. $\beta=\beta_{lm}$ for given $l,m>0$, then $\bV_{-\Delta}(\beta)$ is a nontrivial $\sone$-representation. Moreover,
if $\dim\bV_{-\Delta}(\beta)$ is even, then $\bV_{-\Delta}(\beta)^{SO(2)}=\{0\}$ and if $\dim\bV_{-\Delta}(\beta)$ is odd, then $\bV_{-\Delta}(\beta)^{SO(2)}\approx_{SO(2)}\cH^2_0$.
\item If $\beta \in\cA_0$, i.e. $\beta=\beta_{0m}$ for a given $m \in \bN$, then $\dim \bV_{-\Delta}(\beta)$ is an odd number. Moreover, if $\dim\bV_{-\Delta}(\beta)=1$, then $\bV_{-\Delta}(\beta)\approx_{SO(2)}\cH^2_0$ is a trivial $SO(2)$-representation.
\end{enumerate}
\end{Corollary}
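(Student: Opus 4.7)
The plan is to reduce everything to the $SO(2)$-isotypical decomposition
$$\bV_{-\Delta}(\beta)\approx_{SO(2)}\bigoplus_{l\geq 0,\,\beta\in\cA_l}\cH^2_l,$$
supplied by Fact \ref{fact:opis}, combined with the specialisation of Theorem \ref{thm:nieprzywiedlnosc} to $N=2$: the space $\cH^2_0$ is a $1$-dimensional trivial $SO(2)$-representation, while $\cH^2_l$ for $l>0$ is a $2$-dimensional irreducible nontrivial $SO(2)$-representation. Once these two ingredients are in hand, the corollary becomes pure bookkeeping.

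First I would extract two elementary facts from the decomposition. The $SO(2)$-fixed-point subspace of any direct sum of such summands collects only the trivial ones: since $\cH^2_l$ with $l>0$ is irreducible and nontrivial, its fixed-point subspace is $\{0\}$. Therefore
$$\bV_{-\Delta}(\beta)^{SO(2)}\approx_{SO(2)}\cH^2_0\ \text{ if }\ \beta\in\cA_0,\qquad \bV_{-\Delta}(\beta)^{SO(2)}=\{0\}\ \text{ otherwise.}$$
In parallel, because every $\cH^2_l$ with $l>0$ contributes an even number to the dimension while $\cH^2_0$ contributes $1$, the parity of $\dim\bV_{-\Delta}(\beta)$ matches whether $\beta\in\cA_0$: it is odd if and only if $\beta\in\cA_0$.

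Now I would read off both parts. For (1), if $\beta\in\cA_l$ with $l>0$ then the nontrivial summand $\cH^2_l$ occurs, so $\bV_{-\Delta}(\beta)$ is a nontrivial $SO(2)$-representation. The parity observation then translates even dimension into $\beta\notin\cA_0$ (hence trivial fixed-point set) and odd dimension into $\beta\in\cA_0$ (hence fixed-point set $\approx_{SO(2)}\cH^2_0$). For (2), $\beta\in\cA_0$ immediately gives odd dimension, and if moreover $\dim\bV_{-\Delta}(\beta)=1$ then no $\cH^2_l$ with $l>0$ can appear in the decomposition, so $\bV_{-\Delta}(\beta)\approx_{SO(2)}\cH^2_0$. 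There is no substantive obstacle here; the corollary is a transparent consequence of Fact \ref{fact:opis} together with the irreducibility and dimensions recorded in Theorem \ref{thm:nieprzywiedlnosc}.
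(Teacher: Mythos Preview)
Your argument is correct and matches the paper's intended reasoning: the corollary is stated without proof and is meant to follow directly from the isotypical decomposition in Fact~\ref{fact:opis} together with the dimensions and (non)triviality of $\cH^2_l$ from Theorem~\ref{thm:nieprzywiedlnosc}, exactly as you lay out. The paper additionally records the explicit eigenfunctions in Fact~\ref{lem:eigenspace}, but this only makes the decomposition concrete and adds nothing to the logic you use.
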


Suppose now that $N=3$. Then, for $l\in\bN\cup\{0\}$, the equation \eqref{eq:beta} is of the form $J_{l+\frac12}'(x)-\frac{1}{2x}J_{l+\frac12}(x)=0$ and therefore $x_{lm}$ is the $m$th solution of this equation in $(0, \infty)$ and $x_{00}=0$.

\begin{Fact}\label{lem:eigenspaceN=3}
Under the above notation,
$\sigma(-\Delta; B^3 )=\bigcup_{l=0}^{\infty}\cA_l=\{\beta_{lm} = x^2_{lm}\}_{l=1,m=1}^{\infty} \cup \{\beta_{0m}=x_{0m}^2\}_{m=0}^{\infty}$ with corresponding eigenvectors:
\begin{enumerate}
\item for $\beta_{lm}$ in the case $l>0$:
\begin{eqnarray*}
v_{kml}^1(r,\varphi_1,\varphi_2)&=&\frac{1}{\sqrt{r}}J_{l+\frac12}( x_{lm}r) P_{lk}(\cos \varphi_1)\sin k\varphi_2, \\
 v_{kml}^2(r,\varphi_1,\varphi_2)&=&\frac{1}{\sqrt{r}}J_{l+\frac12}(x_{lm}r) P_{lk}(\cos \varphi_1)\cos k\varphi_2, \\
v_{0ml}(r,\varphi_1,\varphi_2)&=&J_{l+\frac12}(x_{lm}r)P_l(\cos \varphi_1),
\end{eqnarray*}
 where $k=1,\dots, l$ and $P_{lk}$, $P_l$ are Legendre functions,
\item for $\beta_{0m}$: $v_{0m0}(r,\varphi_1,\varphi_2)=J_{\frac12}(x_{0m}r)$.
\end{enumerate}
\end{Fact}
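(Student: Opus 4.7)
The plan is to specialise the general derivation that precedes Fact \ref{lem:eigenspace} to the case $N=3$, and to combine it with the classical description of the spherical harmonics on $S^2$. By Fact \ref{fact:opis} the set $\sigma(-\Delta;B^3)$ coincides with $\bigcup_{l\geq 0}\cA_l$, where $\cA_l=\{x_{lm}^2\}$ and $x_{lm}$ are the positive roots of \eqref{eq:beta}. For $N=3$ this equation reads
\[
J'_{l+\frac12}(x)-\frac{1}{2x}J_{l+\frac12}(x)=0,
\]
which is exactly the equation used to define $x_{lm}$ in the statement. Hence the list of eigenvalues $\{\beta_{lm}\}_{l\geq 0,m\geq 0}$ is obtained without further computation.

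To exhibit the eigenvectors I would take the general separation of variables $u(r,\varphi)=v(\varphi)f(r)$ presented before Fact \ref{fact:opis}. For $\mu=\mu_l=l(l+1)$ the radial factor is forced to be $f(r)=J_{l+\frac12}(x_{lm}r)/\sqrt{r}$ (up to a constant), because \eqref{eq:bound1} rules out the Bessel function of the second kind and \eqref{eq:bound2} selects $\sqrt{\beta}=x_{lm}$. This yields the radial multiplier appearing in all the formulas in the statement (with the simplification that for $l=0$ one has $J_{1/2}(x)=\sqrt{2/(\pi x)}\sin x$, so the factor $1/\sqrt{r}$ can be absorbed, matching the form of $v_{0m0}$).

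For the angular factor I would invoke the standard orthonormal basis of the space $\cH^3_l$ of spherical harmonics on $S^2$: in the polar coordinates $(\varphi_1,\varphi_2)$ used throughout the appendix, $\cH^3_l$ is spanned by the associated Legendre functions
\[
P_l(\cos\varphi_1),\quad P_{lk}(\cos\varphi_1)\cos k\varphi_2,\quad P_{lk}(\cos\varphi_1)\sin k\varphi_2,\qquad k=1,\dots,l,
\]
giving $\dim\cH^3_l=2l+1$, in agreement with the dimension formula of Theorem \ref{thm:nieprzywiedlnosc} for $N=3$. Multiplying these spherical harmonics by the radial factor $J_{l+\frac12}(x_{lm}r)/\sqrt{r}$ (respectively $J_{l+\frac12}(x_{lm}r)$ in the $k=0$ case) produces the functions $v^1_{kml},v^2_{kml},v_{0ml}$ listed in the statement. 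For $l=0$ one has $\cH^3_0=\mathrm{span}\{1\}$, which combined with the radial factor $J_{\frac12}(x_{0m}r)$ (after absorbing $1/\sqrt{r}$ as above) gives exactly $v_{0m0}$.

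The step that requires the most care is the angular one: verifying that the listed Legendre-type functions really form a basis of $\cH^3_l$ and that this basis is compatible with the $SO(3)$-action so that Fact \ref{fact:opis} identifies $\bV_{-\Delta}(\beta_{lm})$ with $\bigoplus_l \cH^3_l$ taken over those $l$ with $\beta_{lm}\in\cA_l$. This is, however, classical material on surface harmonics (see e.g.\ \cite{Shimakura}), and once it is invoked, the statement follows by simply collecting the radial-angular products. Thus no genuinely new obstacle arises; the work is to specialise the preceding general construction and to cite the explicit basis of $\cH^3_l$.
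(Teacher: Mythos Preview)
Your proposal is correct and follows exactly the route the paper intends: the Fact is stated there without a separate proof, as a direct specialisation of the general separation-of-variables derivation (equations \eqref{eq:evsphere}--\eqref{eq:beta}) combined with the classical Legendre basis of $\cH^3_l$. Your handling of the radial factor and the $1/\sqrt{r}$ normalisation matches the paper's presentation, so nothing further is needed.
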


From the above fact it follows that $\cH^3_l\approx_{SO(3)}\operatorname{span}\{v_{0ml},v_{1ml}^1,v_{1ml}^2,\ldots v_{lml}^1, v_{lml}^2\}$ for $l>0$ and  $\cH^3_0\approx_{SO(3)}\operatorname{span}\{v_{0m0}\}$.

The description of $\cH^N_l$ in the general case can be found in \cite{Vilenkin} (Chapter IX).



\begin{thebibliography}{99}
\bibitem{[MICH]}  V. M. Babi\v{c}, M. B. Kapilevi\v{c}, S. G. Mihlin, G. I. Natanson, P. M. Riz, L. N. Slobodeckii,  M. M. Smirnov, \emph{The linear equations of mathematical physics}, \rm (Russian),
Nauka, Moscow, 1964.
\bibitem{[Bartsch]} T. Bartsch, \emph{Topological methods for variational problems with symmetries}, \rm Lecture Notes in Mathematics 1560, Springer-Verlag, Berlin, 1993.
\bibitem{Dancer1979} E. N. Dancer, \emph{On nonradially symmetric bifurcation}, J. London Math. Soc.  20(2) (1979), 287--292.
\bibitem{[Dancer]} E. N. Dancer, \emph{The $G$-invariant implicit function theorem\ in infinite dimensions II}, \rm Proceedings of the Royal Society of Edinburgh 102A(3-4) (1986), 211--220.
\bibitem{[GaRyb]} G. L. Garza, S. Rybicki, \emph{Equivariant bifurcation index}, Nonlinear Anal. 73(9) (2010), 2779--2791.
\bibitem{GawRyb} J. Gawrycka, S. Rybicki, \emph{Solutions of systems of elliptic differential equations on circular domains}, Nonlinear Anal. 59(8) (2004), 1347--1367.
\bibitem{Geba} K. G\c{e}ba, \emph{Degree for gradient equivariant maps and equivariant Conley index}, Topological nonlinear analysis II, Birkhäuser (1997),  247–272.
\bibitem{GolKlu} A. Go{\l}\c{e}biewska, J. Kluczenko, \emph{Connected sets of solutions for a nonlinear Neumann problem}, accepted for publication in Differential and Integral Equations (2016).
\bibitem{GolRyb1} A. Go{\l}\c{e}biewska, S. Rybicki, \emph{Global bifurcations of critical orbits of $G$-invariant strongly indefinite functionals}, Nonlinear Anal. 74(5) \rm (2011), 1823--1834.
\bibitem{GolRyb} A. Go{\l}\c{e}biewska, S. Rybicki, \emph{Equivariant Conley index versus the degree for equivariant gradient maps}, Disc. Contin. Dyn. Syst. Ser. S 6(4) (2013), 985--997.
\bibitem{Gurarie} D. Gurarie, \emph{Symmetries and Laplacians. Introduction to harmonic analysis, group representations and applications}, North-Holland Mathematics Studies 174, North-Holland, Amsterdam, 1992.
\bibitem{Izydorek} M. Izydorek, \emph{Equivariant Conley index in Hilbert spaces and applications to strongly indefinite problems}, Nonlinear Anal. TMA 51(1) (2002), 33--66.
\bibitem{Kawakubo} K. Kawakubo, \textit{The theory of transformation groups}, Oxford University Press, 1991.
\bibitem{Klu} J. Kluczenko, \emph{Bifurcation and symmetry breaking of solutions of systems of elliptic differential equations}, Nonlinear Anal. 75(11) (2012), 4278--4295.
\bibitem{Mucha} K. Muchewicz, S. Rybicki, \emph{Existence and continuation of solutions for a nonlinear Neumann problem}, Nonlinear Anal. 69(10) (2008), 3423--3449.
\bibitem{PRS} E. P\'{e}rez-Chavela, S. Rybicki, D. Strzelecki, \emph{Symmetric Liapunov center theorem}, Calc. Var.  56(2) (2017), doi:10.1007/s00526-017-1120-1.
\bibitem{Ryb2005} S. Rybicki, \emph{Degree for equivariant gradient maps}, Milan J. Math.  73 (2005), 103--144.
\bibitem{RybShiSte} S. Rybicki, N. Shioji, P. Stefaniak, \emph{Rabinowitz alternative for non-cooperative elliptic systems on geodesic balls}, presented for publication, arXiv:1703.08417.
\bibitem{RybSte} S. Rybicki, P. Stefaniak, \emph{Unbounded sets of solutions of non-cooperative elliptic systems on spheres}, J. Differential Equations 259(7) (2015), 2833--2849.
\bibitem{Shimakura} N. Shimakura, \emph{Partial differential operators of elliptic type}, Translations of Mathematical Monographs 99, American Mathematical Society, Providence, Rhode Island, 1992.
\bibitem{SmoWass} J. Smoller, A. Wasserman, \emph{Bifurcation and symmetry-breaking}, Invent. Math. 100(1) (1990), 63--95.
\bibitem{Ste} P. Stefaniak, \emph{Symmetry breaking of solutions of non-cooperative elliptic systems}, J.~Math. Anal. Appl. 408(2) (2013), 681--693.
\bibitem{TomDieck1} T. tom Dieck, \emph{Transformation groups and representation theory}, Lecture Notes in Mathematics 766, Springer, Berlin, 1979.
\bibitem{TomDieck} T. tom Dieck, \emph{Transformation groups}, Walter de Gruyter \& Co., Berlin, 1987.
\bibitem{Vilenkin} N. Ja. Vilenkin \emph{Special functions and the theory of group representation}, Translations of Mathematical Monographs 22, American Mathematical Society, 1988.
\bibitem{[Wass]} A. G. Wasserman, \it Equivariant differential topology, \rm Topology  8 \rm (1969), 127--150.

\end{thebibliography}
\end{document}